\documentclass[reqno, 12pt]{amsart}
\usepackage{hyperref}
\usepackage{amsmath, amsthm, amssymb, mathtools, graphicx, paralist, color}
\usepackage{indentfirst}
\usepackage{appendix}
\usepackage{listings}
\usepackage{hyphenat}
\usepackage{calrsfs}
\usepackage{enumerate}
\usepackage{tikz-cd}
\usepackage{mathrsfs}
\usepackage{comment}
\hypersetup{
  hidelinks,
  pdftitle={Sign Bias in Hyperbolic Double Cosets},
  pdfauthor={Marios Voskou}
}
\newtheorem{conjecture}{Conjecture}

\newtheorem{theorem}{Theorem}
\newtheorem{lemma}{Lemma}
\theoremstyle{remark}
\newtheorem{remark}{Remark}
\usepackage[utf8]{inputenc}
\usepackage{listings}

\def \hGh {{\langle h \rangle \backslash \Gamma \slash \langle h \rangle}}
\def \psl  {{\hbox{PSL}_2( {\mathbb R})} }
\usepackage{bbm}
\usepackage[
backend=biber,
style=alphabetic,
sorting=nyt
]{biblatex}
\title{Sign bias in hyperbolic double cosets}
\author{Marios Voskou }
\address{Alfr\'{e}d R\'{e}nyi Institute of Mathematics, POB 127, Budapest H-1364, Hungary}
\email{voskou.marios@renyi.hu}
\subjclass[2020]{Primary 11F72; Secondary 11N45, 58J50}
\date{\today}

\begin{document}
\begin{abstract} We study sign bias between hyperbolic double cosets with $ad>0$ and $ad<0$ in appropriate cocompact Fuchsian groups. We prove that an eventual bias can only favour $ad<0$, and show that such a bias occurs within every cocompact Fuchsian commensurability class. When the normal-derivative periods of all nonconstant eigenfunctions with eigenvalue at most $1/4$ vanish, we prove an $\Omega_\pm(X^{1/2})$ result and show that the counting difference takes both signs in every sufficiently large interval $[X,CX]$, for some fixed $C>1$.\end{abstract}
\maketitle
\section{Introduction}
Let $\Gamma$ be a cocompact Fuchsian group that contains a non-trivial primitive diagonal element $h %=\begin{pmatrix}
%m & 0\\
%0 & m^{-1}
%\end{pmatrix},
$ and no elements with both diagonal entries equal to $0$. Let $I$ be the positive imaginary axis. We study the double cosets $\langle h\rangle\gamma\langle h\rangle$ for which the distance between $I$ and $\gamma I$ is bounded by a growing parameter. This distance is related to the entries of $\gamma$ via the following equality, due to Martin--McKee--Wambach \cite[Lemma 1]{mmw}:
 for $\gamma= \begin{pmatrix}
a & b\\
c & d
\end{pmatrix},$
we have $$\cosh{\hbox{dist}(\gamma I, I) }= \mathrm{max}\left(\left| B(\gamma) \right|,1\right), \,
\hbox{where } \; \; B(\gamma):=ad+bc=2ad-1.$$
In particular, consider the function $$N(X):=\# \left\{\gamma \in \hGh \mid |ad|\leq X \right\}.$$
The problem of estimating $N(X)$ has been studied by various authors, including Good \cite{Good1983}, Martin--McKee--Wambach \cite{mmw}, %Parkkonen--Paulin \cite{ppaulin17},
and  Lekkas--Petridis \cite{lekkas2025}. 

We note that as $\Gamma$ is cocompact, the spectrum of the Laplacian on $\Gamma \backslash \mathbb{H}$ is discrete. We write $\lambda_j=s_j(1-s_j)$ for the eigenvalues, with multiplicity, and we fix a complete orthonormal basis of corresponding real-valued \(L^2\)-eigenfunctions $(u_j)_j$. We write $s_j=1/2+it_j$, where $t_j$ is either real or purely imaginary. For $l=\langle h\rangle\backslash I$, Lekkas--Petridis used spectral theory to prove that
\begin{equation*} N(X)=\frac{4\left(\mathrm{len}(l)\right)^2}{\pi\mathrm{Vol}\left(\Gamma \backslash \mathbb{H} \right)}X+\sum_{1/2 < s_j < 1}a_jX^{s_j}+O\left( X^{2/3} \right). \end{equation*}
%Here, the sum is over all the non-trivial  eigenvalues $\lambda_j=s_j(1-s_j)$ of the Laplacian on $\Gamma \backslash \mathbb{H}$ that are smaller than $1/4$, with multiplicity. %The explicit expressions for the coefficients $a_j$ can be found in \cite[Theorem 1.2]{lekkas2025}
The coefficients $a_j$ are obtained from \cite[Theorem 1.2]{lekkas2025} by rescaling the counting parameter by a factor of $2$. %We note that their result uses $|ad+bc|$ as the growth parameter rather than $|ad|$, but it is clear that the error between the two countings is no greater than $O(X^{2/3})$.
%We note that as the group is cocompact, the corresponding spectrum is discrete. We write $\lambda_j=s_j(1-s_j)$ for its eigenvalues, with multiplicity, and we fix a complete orthonormal basis of corresponding real-valued \(L^2\)-eigenfunctions $(u_j)_j$. We write $s_j=1/2+it_j$, where $t_j$ is either real or purely imaginary.

In this paper, we investigate potential bias between the double cosets with $ad>0$ and those with $ad<0$. Geometrically, for $|ad|>1$, the sign of $ad$ corresponds to whether $\gamma I$ is oriented horizontally away from or towards the imaginary axis (see \cite[Figure 1]{voskou2026}). We let
$$N^{\pm}(X)=\# \left\{\gamma \in \hGh \mid  |ad|\leq X, \; \; \hbox{sign}(ad)=\pm1\right\},$$ %and
%$$N^{-}(X)=\# \left\{\gamma \in \hGh  \mid  |ad|\leq X, \; \; ad<0\right\},$$
so that $N(X)=N^{+}(X)+N^{-}(X)$. We study the sign of $N_{\Delta}(X)=N^{+}(X)-N^{-}(X)$. %If there is an $X_0$ such that, for every $X>X_0$, we have $N_{\Delta}(X)>0$ (respectively, $<0$) we say that there is an \emph{eventual bias towards the positive (respectively, negative) sign of} $ad$.
A starting point for this investigation is the following theorem from \cite[Theorem 6]{voskou2026}, which states that the two counting functions are in fact asymptotically equal.
\begin{theorem} \label{refcount}
    For $N_{\Delta}(X)$ as above, we have the following asymptotic formula:
  $$N_{\Delta}(X)=\sum_{1/2<s_j < 1}-c(s_j)  \hat{u}_{1,j}^2X^{s_j}+O\left(X^{2/3}\right),$$
  where \begin{equation} \label{defcj} c(s)=\frac{\pi(1-s)\Gamma(2s-1)}
{4^{s-1}\Gamma^4(\frac{s+1}{2})},
%\frac{2^{s-1}(1-s)\Gamma(s-1/2)\Gamma(s/2)}{\Gamma^3((s+1)/2)},
  \end{equation}
  and, for $\lambda_j \neq 0$,
  $$\hat{u}_{1,j}:= \lambda_j^{-1/2}\int_{l} \partial_{\boldsymbol{n}}u_j ds,$$
  where $\boldsymbol{n}$ is a consistently chosen hyperbolic unit normal along the parametrised closed geodesic $l$, and $ds$ denotes hyperbolic arclength. In particular, $N^{+}(X) \sim N^{-}(X)$.
\end{theorem}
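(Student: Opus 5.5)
This is the counting theorem of \cite[Theorem 6]{voskou2026}. I would prove it by the spectral method of Lekkas--Petridis \cite{lekkas2025}, applied to a \emph{signed} counting kernel instead of the kernel for $|ad|$.

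\emph{Signed kernel and its spectral expansion.} Parametrise the double cosets by a pair of points $(r_1,r_2)$ on $l\times l$. For a point $z$ at signed distance $\rho$ from $I$ (in Fermi coordinates), the sign of $ad$ is the sign of the product of the normal displacements of the two ends of the common perpendicular between $I$ and $\gamma I$. So I form the automorphic kernel
$$K(z,w)=\sum_{\gamma\in\Gamma} k(z,\gamma w),$$
with $k$ depending only on $B(\gamma)$ and carrying the factor $\mathrm{sign}(ad)$. I integrate it over $l\times l$, and the double coset sum becomes $\sum_{\gamma\in\hGh}\mathrm{sign}(ad)\,\chi(|ad|\le X)$. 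Expanding $K$ spectrally gives $\sum_j \hat k(t_j)\,\langle\cdot,u_j\rangle$-type terms. The key point is that $\mathrm{sign}(ad)$ is odd under reflection across $l$. Hence the period that appears is not $\int_l u_j\,ds$ but the odd (normal-derivative) Fourier mode of $u_j$ in Fermi coordinates. Concretely, near $l$ I decompose
$$u_j(r,\rho)=f_0(\rho)\int_l u_j+f_1(\rho)\int_l\partial_{\boldsymbol n}u_j+\dots,$$
where $f_1$ is the odd solution of the radial ODE $f''+\tanh\rho\, f'+\lambda f=0$ with $f_1'(0)=1$. I expect that only the $f_1$ component survives in both variables, which yields the squares $\left(\int_l\partial_{\boldsymbol n}u_j\right)^2$. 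These are $\lambda_j\hat u_{1,j}^2$, and the factor $\lambda_j$ is absorbed into $c$.

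\emph{Transform computation.} The constant eigenfunction drops out, since $\partial_{\boldsymbol n}1=0$. This is why there is no main term of order $X$, and it gives $N^+\sim N^-$. For each $s_j\in(1/2,1)$, I compute the Selberg/Fermi transform of the sharp signed cutoff. Asymptotically this means evaluating integrals of the form $\int_0^\infty \cosh^{-s}$-type expressions against the odd radial function. Their leading term should be $-c(s_j)X^{s_j}/\lambda_j$, with $c(s)$ produced by a Mellin--Barnes or Beta-integral computation that yields the stated Gamma factors. I would check the formula, and in particular the sign, by evaluating at a convenient $s$.

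\emph{Error term (main obstacle).} The sharp cutoff must be smoothed at scale $X^{1-\delta}$, with monotone upper and lower approximants in the unsigned case. For a signed count, I would instead smooth $N^\pm$ separately. The bounds for $N^+$ and $N^-$ individually come from the odd and even parts, and the even part is exactly the Lekkas--Petridis count. The tempered spectrum then contributes $\sum_{t_j}|\hat k_\delta(t_j)|\,|\text{periods}|^2$. Here I need mean-square bounds on normal-derivative periods, $\sum_{|t_j|\le T}\left|\int_l\partial_{\boldsymbol n}u_j\right|^2\ll T^3$, which follow from a local Weyl law or pretrace argument along $l$. I also need decay of $\hat k_\delta$ like $X^{1/2}|t|^{-3/2}\min(1,(X^{-\delta}|t|)^{-1})$. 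Balancing the smoothing error $X^{1-\delta}$ against the spectral sum $X^{1/2+\delta/2}$ gives $\delta=1/3$ and the $O(X^{2/3})$ error. Controlling the extra power of $t$ from the derivative period against the improved decay of the odd transform is the delicate step. I would first try to verify that the odd transform decays by an extra factor $|t|^{-1}$ compared with the even one.
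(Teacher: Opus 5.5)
Your overall architecture is the right one: a relative trace formula whose kernel is odd under reflection across $l$, so that the constant eigenfunction and the ordinary periods drop out and only $\left(\int_l\partial_{\boldsymbol n}u_j\,ds\right)^2$ survives, followed by hypergeometric asymptotics for the transform and smoothing of the monotone $N^{\pm}$. However, the step that is supposed to produce $\mathrm{sign}(ad)$ fails as written.

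In $K(z,w)=\sum_\gamma k(z,\gamma w)$, $k$ must be a point-pair invariant, that is, a function of $d(z,\gamma w)$. It cannot ``depend only on $B(\gamma)$ and carry the factor $\mathrm{sign}(ad)$''. For a genuine point-pair invariant, integrating over $l\times l$ gives $\sum_j h(t_j)\left(\int_l u_j\,ds\right)^2$, i.e.\ ordinary periods. After unfolding, each double coset contributes $\int_I\int_I k(d(z,\gamma w))\,ds\,ds$. This quantity is unchanged under $\gamma\mapsto\gamma\left(\begin{smallmatrix}0&1\\-1&0\end{smallmatrix}\right)$, which maps $I$ to itself but sends $B$ to $-B$. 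So your construction only sees $|B(\gamma)|$, and the oddness has to be built into the operation itself.

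The relative trace formula the paper relies on (Theorem \ref{modtrace}, from \cite[Theorem 7(c)]{voskou2026}) does exactly this, one-sidedly. It comes from the Poincar\'e series over $\langle h\rangle\backslash\Gamma$ of the odd function $\sinh\rho\,f(\cosh^2\rho)$ of the signed distance $\rho$ to $I$. The inner product of this series with $u_j$ involves only $\int_l\partial_{\boldsymbol n}u_j$; this is where the odd Fermi solution, ${}_2F_1\left(\frac{s+1}{2},\frac{2-s}{2};\frac32;-\tan^2v\right)$ with $\tan v=\sinh\rho$, enters $d^{(1)}_t$. One then integrates the normal derivative of the series along $l$:
\begin{itemize}
\item[(i)] the identity gives $f(1)\mathrm{len}(l)$;
\item[(ii)] every other double coset gives $B(\gamma)g(B^2(\gamma);f)$;
\item[(iii)] the sign appears only asymptotically, via $B/\sqrt{B^2-1}=\mathrm{sgn}(B)+O(B^{-2})$, once $f$ is chosen by inverting the Abel-type transform $g(\cdot\,;f)$ so that $g$ is a smoothed cutoff.
\end{itemize}
A two-sided version, $\int_l\int_l\partial_{\boldsymbol n_z}\partial_{\boldsymbol n_w}K(z,w)\,ds\,ds$, would also work. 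Some such normal derivative or odd weight is the missing ingredient.

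Your error-term bookkeeping also does not close. The bound $\sum_{t_j\le T}\left|\int_l\partial_{\boldsymbol n}u_j\,ds\right|^2\ll T^3$ is correct. But combined with a transform of size $X^{1/2}|t|^{-3/2}\min\left(1,X^{\delta}/|t|\right)$, the range $t_j\le X^\delta$ contributes $\asymp X^{1/2}\int_1^{X^\delta}t^{1/2}\,dt\asymp X^{1/2+3\delta/2}$, and the tail $t_j>X^\delta$ diverges. Even with more smoothing you would only get $O(X^{4/5})$.

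What is needed, and true, is decay $|t|^{-5/2}$ against the raw derivative periods. That is a gain of $|t|^{-2}$ over the unsigned transform, not the $|t|^{-1}$ you propose to verify. Your own leading coefficient already shows this: by Stirling $|c(\frac12+it)|\asymp|t|^{-1/2}$, so $c(s_j)/\lambda_j\asymp|t_j|^{-5/2}$.

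Equivalently, with $\hat u_{1,j}=\lambda_j^{-1/2}\int_l\partial_{\boldsymbol n}u_j\,ds$ one has $\sum_{t_j\le T}\hat u_{1,j}^2\ll T$ and coefficients of size $|t|^{-1/2}$ (compare $b(t)\ll t^{-3/2}$ for the averaged count in Lemma \ref{estimates}(e)). This is exactly the shape of the unsigned Lekkas--Petridis problem. Balancing $X^{1-\delta}$ against $X^{1/2+\delta/2}$ then gives $\delta=1/3$ and the error $O(X^{2/3})$.
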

We also have the following conjecture regarding the best possible error term.
\begin{conjecture} \label{halfcon}
    The error term $O(X^{2/3})$ can be improved to $O_{\epsilon}(X^{1/2+\epsilon})$.
\end{conjecture}

 While the two signs contribute asymptotically the same number of double cosets, it is still possible to have eventual bias towards one of the two. As is evident from Theorem \ref{refcount}, one potential source of sign bias is the existence of a small eigenvalue $\lambda_j$ with $\hat{u}_{1,j} \neq 0$. In fact, we will prove that this is the only case in which $N_{\Delta}(X)$ can possibly have eventually constant sign. Furthermore, we show that while eventual bias towards the negative sign is possible, $N_\Delta(X)$ cannot be eventually nonnegative.

 \begin{theorem} \label{changes}
    Let $\Gamma$ be a cocompact Fuchsian group that contains a non-trivial diagonal element and no elements with both diagonal entries equal to $0$. Then:
\begin{itemize}
    \item[\textup{(a)}] If there is at least one $j$ with $s_j \in (2/3,1)$ and $\hat{u}_{1,j} \neq 0$, then there is an $X_0$ such that for every $X>X_0$, we have $N_{\Delta}(X)<0$.
    \item[\textup{(b)}] If there are no values of $j$ with $s_j \in [1/2,1)$ and $\hat{u}_{1,j} \neq 0$, then %$N_{\Delta}(X)$ changes sign infinitely many times as $X \rightarrow \infty$.
    there exist constants $X_0$ and $C>1$ such that, for every $X>X_0$, there are $x_{+},x_{-} \in [X,CX]$ with $$ N_{\Delta}(x_{+})> \sqrt{X} \quad \hbox{  and  } \quad N_{\Delta}(x_{-})<-\sqrt{X}.$$ 
    \item[\textup{(c)}] There is no $X_0$ such that $N_{\Delta}(X) \geq 0$ for every $X> X_0$.
\end{itemize}
In particular, $N_{\Delta}(X)$ either eventually stays nonpositive, or changes sign infinitely many times.
\end{theorem}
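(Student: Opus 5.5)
Part (a) follows directly from Theorem \ref{refcount}. First I will check that $c(s)>0$ for $s\in(1/2,1)$: $1-s>0$, $\Gamma(2s-1)>0$ because $2s-1\in(0,1)$, and the remaining gamma factors are positive. Let $s^*$ be the largest $s_j\in(2/3,1)$ with $\hat u_{1,j}\neq0$. Then the main term of $N_\Delta(X)$ is $-c(s^*)\big(\sum_{s_j=s^*}\hat u_{1,j}^2\big)X^{s^*}$. This is strictly negative and of size $X^{s^*}$. It dominates every other term with $s_j<s^*$, together with the $O(X^{2/3})$ error. Hence $N_\Delta(X)<0$ for $X$ large.

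Parts (b) and (c) do not follow from Theorem \ref{refcount} as stated, because the error $O(X^{2/3})$ swamps the scale $X^{1/2}$. I would go back to the spectral machinery behind that theorem, in the form of a smoothed version. Take $\psi_\epsilon$ a smooth approximation to the indicator of $[-1,1]$, and consider the smoothed signed count $\sum_{\gamma}\mathrm{sign}(ad)\,k(ad/X)$, where $k$ is a smooth compactly supported weight. Unfolding in double cosets along $\langle h\rangle$ and expanding the automorphic kernel spectrally gives a sum over $j$. The coefficients are periods of $u_j$ along $l$ against an odd transform, and the odd part singles out the normal-derivative periods $\hat u_{1,j}$. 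Each $j$ contributes a term $\hat u_{1,j}^2\,\hat k(s_j)X^{s_j}$, where $\hat k$ is a Mellin-type transform.

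Under hypothesis (b), every term with $s_j\ge 1/2$ vanishes in the spectral expansion except the tempered ones, $t_j\in\mathbb R$. These give
$$\sqrt X\sum_j \hat u_{1,j}^2\,\mathrm{Re}\big(\alpha(t_j)X^{it_j}\big),$$
an almost periodic function of $\log X$ with real coefficients $\alpha(t_j)$. The coefficients $\hat u_{1,j}^2$ are nonnegative, and they are not all zero; the latter should follow from the asymptotic equality and a Weyl law for periods.

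I would then apply a Landau/Pólya-type oscillation argument, as in the proof of $\Omega_\pm$ results for the prime number theorem error. Divide the smoothed $N_\Delta(X)$ by $\sqrt X$ and pass to $y=\log X$. The result is a nonzero mean-zero almost periodic function $F(y)$ with absolutely convergent coefficients, since the smoothing gives decay in $t_j$. Such a function has a positive maximum $M_+$ and a negative minimum $-M_-$. It comes within $\delta$ of each in every window of length $L$, for some fixed $L$; this is the relative density of almost periods. Taking $C=e^{L}$, with the kernel normalised so that the extrema exceed $1$ and the smoothing error is small, and then unsmoothing using the monotonicity of $N^\pm$, gives points $x_\pm\in[X,CX]$ with $N_\Delta(x_\pm)\gtrless\pm\sqrt X$.

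I expect the main obstacle to be controlling the unsmoothing loss and the convergence of the spectral sum at scale exactly $\sqrt X$. This needs a local Weyl-type bound $\sum_{|t_j|\le T}\hat u_{1,j}^2\ll T^{A}$ with $A$ small enough that the smoothing width $\epsilon$ can be chosen as $X^{-\eta}$ without losing more than $o(\sqrt X)$.

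For (c), suppose $N_\Delta\ge0$ eventually. Apply Landau's theorem to the Mellin transform $\int N_\Delta(x)x^{-s-1}dx$, which is the signed Dirichlet series with spectral meromorphic continuation. Its poles occur at the $s_j$ with $\hat u_{1,j}\neq0$, with residues $-c(s_j)\hat u_{1,j}^2<0$ for real $s_j\in(1/2,1)$. Landau's theorem forces a pole at the real abscissa of convergence with positive residue. If any real $s_j>1/2$ contributes, the residue has the wrong sign, a contradiction. Otherwise we are in case (b), or in a boundary case with $s_j=1/2$ handled similarly by the oscillating tempered terms, and (b) contradicts nonnegativity directly.
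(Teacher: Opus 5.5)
Your part (a) is exactly the paper's argument. The genuine gap is the unsmoothing step in (b). You flag it as the main obstacle, but it cannot be overcome the way you propose.

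A \emph{fixed} almost periodic function of $\log X$ with absolutely convergent coefficients only arises if the smoothing has fixed relative width. For the sharp cutoff the coefficients are $|c(\tfrac12+it)|\asymp t^{-1/2}$, while $\sum_{T<t_j\le 2T}\hat u_{1,j}^2$ grows like $T$. This is the size behind Lemma~\ref{estimates}(e), and Theorem~\ref{modtrace} with $f$ concentrated at $1$ shows it cannot be $O(T^{A})$ with $A<1$. So the local Weyl bound with small $A$ that you hope for (you would need $A<1/2$) is false.

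Unsmoothing through the monotonicity of $N^{\pm}$ costs about the number of all double cosets with $|ad|\in[X,(1+\epsilon)X]$, i.e.\ $\asymp\epsilon X$. This forces $\epsilon=o(X^{-1/2})$. At that width the tempered spectral sum can only be bounded by $\sqrt X\,\epsilon^{-1/2}\gg X^{3/4}$, and it is no longer a fixed almost periodic function. Balancing $\epsilon X$ against $\sqrt X\epsilon^{-1/2}$ is precisely what produces the $X^{2/3}$ in Theorem~\ref{refcount}, and pointwise accuracy $o(\sqrt X)$ would be stronger than Conjecture~\ref{halfcon}. Also, ``normalising the kernel so that the extrema exceed $1$'' is incompatible with using it as an averaging kernel.

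The missing idea is that no unsmoothing is needed for a statement of this type. Keep a nonnegative averaging kernel of fixed relative width, as the paper does with $\overline{N_{\Delta}}(\log X)=X^{-3/2}\int_X^{3X/2}N_\Delta(u)\,du$. By Theorem~\ref{almper}, this is an absolutely convergent almost periodic series plus $o(1)$. If $\overline{N_{\Delta}}(y_+)>\delta$ with $U_+=e^{y_+}$, then the average of $N_\Delta$ over $[U_+,3U_+/2]$ exceeds $2\delta\sqrt{U_+}$, so some $x$ there has $N_\Delta(x)>\delta\sqrt{U_+}$. Applying this at $\delta^{-2}X$, and enlarging $C$, upgrades $\delta\sqrt X$ to $\sqrt X$. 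With this repair, your relative-density-of-almost-periods argument is a valid substitute for the paper's Meher--Murty first/second-moment argument.

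You still need a real reason why $F\not\equiv 0$; $N^+\sim N^-$ says nothing about it. The paper applies Theorem~\ref{modtrace} with $f$ supported in $[1,1+\epsilon]$ and $f(1)=1$, where the left side is $\mathrm{len}(l)+o(1)\neq 0$, so some $\hat u_{1,j}\neq0$. It also uses that the kernel transform is nonzero, namely $b(t)\neq 0$ for all $t>0$.

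For (c), your Landau argument presupposes a meromorphic continuation of $\int N_\Delta(x)x^{-s-1}\,dx$ past $\mathrm{Re}\,s=2/3$ to a neighbourhood of $s=1/2$, with the stated poles. Theorem~\ref{refcount} does not provide this. The paper sidesteps it: by Theorem~\ref{almper}, any $\lambda_j\le 1/4$ with $\hat u_{1,j}\neq 0$ forces $\overline{N_{\Delta}}(y)\to-\infty$, which is incompatible with $N_\Delta\ge 0$ eventually, and otherwise (b) applies.
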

\begin{remark}
    Under the condition of part (b), it is easy to deduce that  $N_{\Delta}(X)=\Omega_{\pm}(X^{1/2})$, and that there are $\gg \log{X}$ sign changes up to $X$. The $\Omega_{\pm}$ result can also be proved using methods similar to \cite{VoskouOmega}.
\end{remark}
\begin{remark}
     Under Conjecture \ref{halfcon}, we obtain the stronger conclusion that if $\lambda_j<1/4$ and $\hat{u}_{1,j} \neq 0$ for some $j$, then there is an eventual bias towards the negative sign. However, even under Conjecture \ref{halfcon}, it is not clear how the sign behaves if the only  eigenvalue $\lambda_j=s_j(1-s_j)$ at most $1/4$ and with nonzero corresponding period is $\lambda_j=1/4$. %We believe that one can use our work in \cite{lekkas2024} to show that in that case $N_{\Delta}(X)$ is negative on a set of logarithmic density one, but it is not clear whether this can be strengthened to hold for all sufficiently large $X$. Under Conjecture \ref{halfcon}, the above discussion covers every other case.
\end{remark}

 Part (a) follows immediately from Theorem \ref{refcount}, since $c(s)>0$ for every $s \in (1/2,1)$. For parts (b) and (c), we study a normalised average of $N_\Delta$. Its spectral expansion separates the contributions of small eigenvalues from a uniformly convergent oscillatory series. When the former are absent, estimates for the first and second moments of the average force sign changes. This argument is a continuous analogue of a method of Meher and Murty \cite[Theorem 1.1]{murphy}.
 %In the absence of small eigenvalues with nonzero periods, its uniformly convergent spectral expansion yields first- and second-moment estimates uniform over moving intervals. Comparing these moments forces positive and negative values of the average, which we then transfer to $N_\Delta$. We use a continuous analogue of the method developed by Meher and Murty in \cite[Theorem 1.1]{murphy}.  
 Related uses of averaging to obtain $\Omega$-results in hyperbolic lattice point problems appear in the work of Phillips--Rudnick \cite{philipsr} and Chatzakos \cite{chatzakos, cha19}.

We also construct groups satisfying the conditions of Theorem \ref{changes}(a), obtaining the following result:
 \begin{theorem} \label{bias}

 For every $\epsilon>0$ and cocompact Fuchsian group $\tilde{\Gamma}$, there is a positive $\delta< \epsilon$, an appropriate group $\Gamma$ that is conjugate to a normal subgroup of $\tilde{\Gamma}$, and a constant $k>0$ such that
     $$N^{-}(X)-N^{+}(X) \sim kX^{1-\delta}.$$
In particular, every cocompact Fuchsian commensurability class contains some group $\Gamma$ such that there is some real number $X_0$ with $N^{-}(X)>N^{+}(X)$ for every $X>X_0$.
 \end{theorem}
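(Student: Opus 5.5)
The plan is to pick a finite-index normal subgroup $\Gamma$ of $\tilde\Gamma$, conjugated appropriately, that has an eigenvalue $\lambda=s(1-s)$ with $s>\max(2/3,1-\epsilon)$ and nonzero normal-derivative period $\hat u_{1,j}$. Theorem \ref{refcount} then does the rest. Let $s^\ast$ be the largest $s_j\in(1/2,1)$ with $\hat u_{1,j}\neq 0$, and put $\delta=1-s^\ast<\epsilon$. Since $c(s)>0$ on $(1/2,1)$ and $s^\ast>2/3$, Theorem \ref{refcount} gives
\[
N^-(X)-N^+(X)\sim kX^{1-\delta},\qquad k=\sum_{s_j=s^\ast}c(s_j)\hat u_{1,j}^2>0 .
\]
So everything reduces to the construction.

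First, I would use Selberg's lemma to pass to a torsion-free normal subgroup $\Gamma_0\trianglelefteq\tilde\Gamma$ of finite index. By Martin--McKee--Wambach, an element with $a=d=0$ is an involution, so $\Gamma_0$ has none; the same holds for every subgroup of $\Gamma_0$. I would then choose a closed geodesic on $\Gamma_0\backslash\mathbb H$ that is homologically nontrivial. For instance, take a nonseparating simple closed geodesic; a surface of genus $\ge 2$ has one. Conjugate so that this geodesic lifts to $I$, with primitive diagonal stabiliser generator $h_0$. Conjugation only changes the group within its conjugacy class, as the statement allows.

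The key step is to produce small eigenvalues with nonzero period via cyclic covers. Let $\eta$ be a harmonic $1$-form on $S_0=\Gamma_0\backslash\mathbb H$ with $\int_l\eta\neq0$. Set $\omega=-\ast\eta$, so that $\int_l \ast\omega\neq0$. This is an open condition, so I may perturb $\omega$ to have rational periods and then scale it so that its periods define a surjection $\varphi:\Gamma_0\to\mathbb Z$. Consider $\Gamma_n=\ker(\Gamma_0\to\mathbb Z/n)$. The eigenfunctions on $\Gamma_n\backslash\mathbb H$ decompose into $\chi_k$-twisted eigenfunctions on $S_0$, where $\chi_k=e^{2\pi i k\varphi/n}$.

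For $\theta=k/n\to0$, I would run standard analytic perturbation of the bottom of the spectrum of the twisted Laplacian. The twisted ground state has $\lambda(\theta)\asymp\theta^2\|\omega\|^2$. Its eigenfunction is $f_\theta=e^{2\pi i\theta\Phi}(1+O(\theta^2))$ on a fundamental domain, where $\Phi$ is a primitive of $\omega$, i.e.\ $f_\theta\approx1+2\pi i\theta\Phi$. Taking the real or imaginary part of a combination of $f_\theta$ and $f_{-\theta}$ gives a real eigenfunction $u$ on $\Gamma_n\backslash\mathbb H$. Its normal derivative along a lift of $l$ integrates to $2\pi\theta\int_l\ast\omega+O(\theta^2)$, and after dividing by $\lambda^{1/2}\asymp\theta$ this gives $\hat u\to c\int_l\ast\omega\neq0$. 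The lift of $l$ need not close up in one sheet. I would handle this by using $h=h_0^{m}$, where $m$ is the order of $\varphi(h_0)$ mod $n$, or alternatively by choosing $\varphi(h_0)=0$ if $\ast\omega$ permits. In either case the period over the lifted geodesic is $m$ times the sheetwise period, which is still nonzero. This step, namely making the first-order perturbation of the eigenfunction and its normal derivative rigorous and uniform, is the main obstacle. I would handle it by Kato perturbation theory applied to the family of twisted Laplacians on the fixed compact surface $S_0$.

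Finally, $\Gamma_n$ need not be normal in $\tilde\Gamma$, so I replace it by its normal core $\Gamma$ in $\tilde\Gamma$, which is still of finite index. Eigenfunctions of $\Gamma_n\backslash\mathbb H$ pull back to $\Gamma\backslash\mathbb H$ with the same eigenvalue. I take $h$ to be the primitive generator of $\mathrm{Stab}_\Gamma(I)$, which is a power of the previous generator. The normal-derivative period over the new closed geodesic is then a positive integer multiple of the old one, so it remains nonzero, and the pulled-back function is still $L^2$-normalisable after rescaling. Choosing $n$ large makes $1-s<\min(\epsilon,1/3)$, which completes the construction. The last sentence of the theorem then follows because commensurability classes are closed under finite-index subgroups and conjugation.
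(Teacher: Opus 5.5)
\textbf{There is a genuine gap in how you handle the lifted geodesic when $\varphi(h_0)\not\equiv 0 \pmod n$.}

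Your eigenfunction $f_\theta$ (with $\theta=1/n$) is not $h_0$-invariant. It is $\chi_\theta$-equivariant, $f_\theta(h_0z)=\chi_\theta(h_0)f_\theta(z)$, and $h_0$ preserves $I$, its normal and arclength. Let $P$ be the integral of $\partial_{\boldsymbol n}f_\theta$ from $z_0$ to $h_0z_0$. Then the $j$-th sheet contributes $\chi_\theta(h_0)^jP$, not $P$. For any $h_0^M$ lying in $\Gamma_n$ (or in its normal core) you therefore get
\[
\int_{\langle h_0^M\rangle\backslash I}\partial_{\boldsymbol n}f_\theta\,ds=P\sum_{j=0}^{M-1}\chi_\theta(h_0)^j=0
\]
exactly, since $\chi_\theta(h_0)\neq1$ and $\chi_\theta(h_0)^M=1$. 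The same holds for $\mathrm{Re}\,f_\theta$ and $\mathrm{Im}\,f_\theta$. Your expansion $f_\theta\approx 1+2\pi i\theta\Phi$ hides this: it is valid only on one sheet, whereas along the closed lift $\Phi$ changes by $m\varphi(h_0)$, which is of size $n$.

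In fact, if $\varphi(h_0)=a\neq0$, only characters with $\theta\in\frac1a\mathbb Z$ are trivial on $h_0$. Their twisted eigenvalues come from a fixed finite family of operators, and the nonzero ones are bounded away from $0$ independently of $n$. So no small eigenvalue of $\Gamma_n\backslash\mathbb H$ has a nonzero period along the lift of $l$. You genuinely need $\varphi(h_0)=0$. Your recipe ($\omega=-\ast\eta$ for a generic $\eta$, followed by a rational perturbation) generically does not produce this, and ``if $\ast\omega$ permits'' is exactly the unproved step.

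\textbf{The repair} is to take $\omega$ to be the harmonic representative of the Poincar\'e dual of $[l]$.
\begin{itemize}
\item $\varphi(\gamma)=\int_\gamma\omega$ is, up to sign, the algebraic intersection number with $l$, so it is integral and surjective onto $\mathbb Z$, because $l$ is nonseparating.
\item $\varphi(h_0)=\pm\int_S\omega\wedge\omega=0$.
\item Since $\ast\omega$ is closed, $\int_l\ast\omega=\pm\int_S\ast\omega\wedge\omega=\pm\|\omega\|_{L^2}^2\neq 0$.
\end{itemize}
With this choice your Kato-perturbation argument goes through: take the real part of $e^{i\alpha}f_\theta$, choosing the phase $\alpha$ so that the period is nonzero. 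One minor correction: after $L^2$-normalising on the $n$-sheeted cover, $\hat u$ has size about $n^{-1/2}$ rather than tending to $c\int_l\ast\omega$. This is harmless, since only nonvanishing at one fixed large $n$ is needed.

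\textbf{Comparison with the paper.} This repaired cover is exactly the cyclic cover the paper uses, but the paper proves nonvanishing without perturbation theory. It uses Cheeger's $\lambda_1(M_{2n})\to0$ and a Rayleigh-quotient comparison with the first Neumann eigenvalue $\mu$ of the cut surface $M'$ to get $\int_{M_i'}u\neq 0$ for some $i$. Green's theorem, $\lambda_1\int_{M_i'}u=\hat u_{\mathcal C_{i-1}}-\hat u_{\mathcal C_i}$, then yields a nonzero period. The paper's route is softer and needs no uniform eigenfunction asymptotics. Yours, once repaired, additionally gives the leading-order behaviour of both $\lambda$ and the period.
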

 \begin{remark}
     We expect Theorem 3 to hold for arbitrary cofinite Fuchsian groups, although we do not pursue this extension here.
 \end{remark}
 The proof combines a construction of Cheeger \cite{Cheeger+1971+195+200} with a variational argument involving the first positive Neumann eigenvalue to establish the nonvanishing of an appropriate period.

\section{Asymptotics for a normalised average of $N_{\Delta}$}
For $X \geq 1$, define
$$\overline{N_{\Delta}}(\log{X}):=X^{-3/2}\int_{X}^{3X/2}N_{\Delta}(u) du.$$ 
This is a normalised average of $N_{\Delta}(u)$ over the interval $[X,3X/2]$.
A key ingredient in the proof of Theorem \ref{changes} is the following result, providing asymptotics for $\overline{N_{\Delta}}(y)$. In particular, if there are no small eigenvalues with nonzero corresponding periods, the function $\overline{N_{\Delta}}(y)$ can be approximated by an almost-periodic function. For related results on almost periodicity and asymptotic moments of integrated hyperbolic counting errors, see \cite{cherubini}.
\begin{theorem} \label{almper}
    For $\overline{N_{\Delta}}(y)$ as above, we have
    $$\overline{N_{\Delta}}(y)=- \!\!\!\!\sum_{1/2<s_j<1}
\alpha(s_j)\hat u_{1,j}^2e^{(s_j-1/2)y}
-\kappa \left(\sum_{t_j=0}\hat{u}_{1,j}^2\right)(y-\eta)+\textup{Re}\sum_{t_j>0}b(t_j)\hat{u}_{1,j}^2e^{it_jy}+o(1),$$
    where $\kappa$ and $\eta$ are positive constants,
    \begin{equation} \label{coefb} \alpha(s)=\frac{c(s)}{s+1}
\left[\left(\frac32\right)^{s+1}-1\right], \qquad 
b(t)=-2\alpha\left(\frac{1}{2}+it\right)
%-\left(\frac{1}{4}+t^2\right)\left(3^{3/2+it}-2^{3/2+it}\right)\gamma(t)=-2\alpha(s)
,\end{equation}
and $c(s)$ is defined as in equation (\ref{defcj}).
%and, for $s=1/2+it$, we have
%$$\gamma(t):=\frac{\left(\Gamma\left(s/2\right)\right)^2\Gamma\left(s-1/2\right)}{8\left(\Gamma\left((1+s)/2\right)\right)^2 \Gamma\left(1+s/2\right) \Gamma\left((3+s)/2\right)}.$$
Furthermore, \begin{equation} \label{best}\sum_{t_j>0}|b(t_j)|\hat{u}_{1,j}^2< \infty . \end{equation}
\end{theorem}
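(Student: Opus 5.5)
The plan is to obtain $\overline{N_{\Delta}}$ from a smoothed spectral expansion, not from Theorem \ref{refcount} directly: the $O(X^{2/3})$ error there is too large after normalising by $X^{-3/2}$ and integrating, since it would leave $O(X^{1/6})$ rather than $o(1)$.

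\textbf{Step 1: kernel and pre-trace formula.} Write $N_\Delta(u)$ through the counting kernel $k_u(B)$, defined as $\mathbbm{1}$ of $|ad|\le u$ times $\hbox{sign}(ad)$. In Fermi coordinates relative to $I$, both $ad$ and the sign of $ad$ become functions of the displacement of $\gamma I$ from $I$. Integrating in $u$ over $[X,3X/2]$ replaces the sharp cutoff by a piecewise linear one, $\int_X^{3X/2}\mathbbm{1}_{|ad|\le u}\,du=(3X/2-|ad|)_+-(X-|ad|)_+$. This kernel is Lipschitz, so its Selberg/Good-type transform decays one power of $t$ faster than that of the sharp cutoff.

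Next apply the relative (double-coset) pre-trace formula of \cite{voskou2026}, which was used there to prove Theorem \ref{refcount}. It expresses the unsmoothed count as a spectral sum over $j$ of $h_u(t_j)\,\hat u_{1,j}^2$, where $h_u$ is the odd-part transform paired with normal-derivative periods. The even part paired with ordinary periods cancels in $N^+-N^-$, and the $\lambda_0=0$ term vanishes because $\partial_{\boldsymbol n}u_0=0$. For $\overline{N_\Delta}$ this yields $\sum_j \overline h_X(t_j)\hat u_{1,j}^2$ with $\overline h_X(t)=X^{-3/2}\int_X^{3X/2}h_u(t)\,du$.

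\textbf{Step 2: evaluate the transform.} The transform from \cite{voskou2026} has leading term $-c(s)u^s$ plus a term with $s\leftrightarrow 1-s$ and lower-order terms. Integrating gives
$$X^{-3/2}\!\int_X^{3X/2}\!u^s\,du=\frac{(3/2)^{s+1}-1}{s+1}X^{s-1/2}.$$
This produces $\alpha(s)e^{(s-1/2)y}$ with $y=\log X$.

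For $s_j\in(1/2,1)$ the dual term is $O(X^{1/2-s_j})=o(1)$. For $t_j>0$ the pair $s,1-s$ combines into $\mathrm{Re}(-2\alpha(1/2+it)e^{ity})$, which explains $b(t)=-2\alpha(1/2+it)$. For $t_j=0$ the two terms merge into a double root, giving the linear term $-\kappa(y-\eta)$. I would take $\kappa$ and $\eta$ from the $s\to1/2$ limit of $(u^s c(s)-u^{1-s}c(1-s))/(s-1/2)$-type expressions and check their positivity directly, since $c$ has a pole at $s=1/2$ from $\Gamma(2s-1)$.

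\textbf{Step 3: uniform convergence and the bound (\ref{best}).} By Stirling, $|c(1/2+it)|\asymp |t|^{-3/2}$: the factor $\Gamma(2it)$ gives $|t|^{-1/2}$, $\Gamma^4$ in the denominator gives $|t|^{-2}$ against the $e^{-\pi|t|}$ decay, and $(1-s)$ gives $|t|$. Hence $|\alpha(1/2+it)|\asymp |t|^{-5/2}$.

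A local Weyl law for normal-derivative periods, $\sum_{t_j\le T}\hat u_{1,j}^2\ll T$, holds because $\hat u_{1,j}$ already carries the normalisation $\lambda_j^{-1/2}$; it comes from the same pre-trace formula with a nonnegative test function. Partial summation then gives $\sum|b(t_j)|\hat u_{1,j}^2<\infty$, which is (\ref{best}).

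\textbf{Main obstacle.} The hard part is controlling the non-leading parts of the transform uniformly in $t$ and $X$: the remainder after extracting $u^{s}$ and $u^{1-s}$, and the region $|ad|\le1$ where $B$ is replaced by $1$. The goal is to show that, after averaging, the remainder is $\ll X^{-\delta}\min(1,|t|^{-1-\epsilon'})$ times the normalisation. Summing against the local Weyl law then gives $o(1)$. I would first try the explicit hypergeometric form of $h_u$, using its asymptotic expansion in $u$ with error terms uniform in $t$. Failing that, I would integrate by parts in $u$ once more to gain a further factor of $1/t$.
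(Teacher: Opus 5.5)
Your overall architecture matches the paper's: the relative trace formula (Theorem \ref{modtrace}) with a test function realising the average over $u\in[X,\tfrac32X]$, main terms $-\alpha(s)X^{s-1/2}-\alpha(1-s)X^{1/2-s}$, $b(t)=-2\alpha(\tfrac12+it)$ from pairing $s$ with $\bar s=1-s$, and a double-root limit at $t_j=0$. \textbf{The gap is the step that actually produces the $o(1)$:} passing from termwise asymptotics to the full spectral sum. You leave this as the ``main obstacle'' and aim for a remainder bound $\ll X^{-\delta}\min(1,|t|^{-1-\epsilon'})$, uniform in $t$ and $X$, which you do not prove. The decay in $t$ is not the difficulty, since both the averaged transform and the main term are $\ll|t|^{-3/2}$. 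The uniform factor $X^{-\delta}$ is the difficulty. It would need polynomial-in-$t$ control of the error in the connection-formula expansion, whereas only control uniform on compact sets of $s$ is readily available. \emph{The missing idea is that no such uniform remainder is needed.} It suffices to have two ingredients. The first is the expansion for each fixed $t$, with error $O_t(X^{-1/2}\log X)$. The second is a majorant for the \emph{full} averaged transform, uniform in $X$; this is what your Lipschitz smoothing provides. Together with the local Weyl law, the majorant gives $X^{-1/2}\sum_{t_j>T}\lambda_j|d^{(1)}_{t_j}(2X)|\hat u_{1,j}^2\ll T^{-1/2}$ and $\sum_{t_j>T}|b(t_j)|\hat u_{1,j}^2\ll T^{-1/2}$. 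Now split the spectral sum at $T$. The finitely many terms with $t_j\le T$ converge individually, and both tails are $\ll T^{-1/2}$. So the $\limsup_{X\to\infty}$ of the difference between the spectral sum and $\mathrm{Re}\sum_{t_j>0}b(t_j)\hat u_{1,j}^2X^{it_j}$ is $\ll T^{-1/2}$ for every $T$, hence $0$. This is the paper's argument via Lemma \ref{estimates}(a),(d),(e). It gives $o(1)$ with no rate, which is all that is claimed.

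Three smaller points. First, your Stirling computation is wrong. Since $\mathrm{Re}\,\tfrac{s+1}{2}=\tfrac34$, we have $|\Gamma(\tfrac34+\tfrac{it}{2})|^4\asymp|t|e^{-\pi|t|}$, not $|t|^2e^{-\pi|t|}$. Hence $|c(\tfrac12+it)|\asymp|t|^{-1/2}$ and $|b(t)|\asymp|t|^{-3/2}$ (Lemma \ref{estimates}(e)), not $|t|^{-5/2}$. Equation (\ref{best}) still follows from $\sum_{t_j\le T}\hat u_{1,j}^2\ll T$, but only with tail $\ll T^{-1/2}$, so there is much less room than you think. Second, for $t_j=0$ you cannot simply let $s\to\tfrac12$ in the expansion, because its error is uniform only on compact sets avoiding $s=\tfrac12$. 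The paper observes that $F(z)=(\tfrac14-z^2)d^{(1)}_{-iz}(2X)X^{-1/2}+\alpha(\tfrac12+z)X^{z}+\alpha(\tfrac12-z)X^{-z}$ is holomorphic at $z=0$. It then applies the maximum modulus principle on $|z|=\epsilon$ to transfer the $O(X^{-1/2}\log X)$ bound to $z=0$, which produces $-\kappa(\log X-\eta)$. Third, do not start from a spectral expansion of the sharp count and then average it termwise. That expansion does not converge absolutely, so the termwise average is not justified. Apply the formula directly to the averaged, continuous test function, as the paper does with $f_4$. You must also carry out the geometric-side conversions, from $B(\gamma)/\sqrt{B^2(\gamma)-1}$ to $\mathrm{sign}(ad)$ and from a cutoff in $|B(\gamma)|$ to one in $2|ad|$. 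These cost $O(X^{-1/2}\log X)$ after normalisation.
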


\begin{remark} \label{thm4c} Observe that the coefficients $\alpha(s_j)$ and $\kappa$ are positive. Therefore, if there is at least one small eigenvalue $\lambda_j \leq 1/4$ with $\hat{u}_{1,j} \neq 0$, the quantity $\overline{N_{\Delta}}(y)$ will be negative for $y$ large enough. Hence, in this case, $N_{\Delta}(X)$ cannot be nonnegative for all large enough $X$.  In light of this, to prove Theorem \ref{changes}(c) it is enough to prove Theorem \ref{changes}(b).
\end{remark}
To prove Theorem \ref{almper}, recall the following result from \cite[Theorem 7(c)]{voskou2026}.

\begin{theorem}[Relative Trace Formula] \label{modtrace} Let $\Gamma$ be a cocompact Fuchsian group with at least one non-trivial primitive diagonal element $h$, and no element with both diagonal entries zero. Let $f:[1,\infty) \rightarrow \mathbb{R}$ be a real, continuous, piecewise differentiable function with exponential decay. Then, we have
\begin{equation}\label{trace}
 \; \displaystyle  f(1)\mathrm{len}(l)+\sum_{\gamma \in \hGh-\mathrm{id}} B(\gamma)g\left(B^2(\gamma); f\right)=2\sum_{j} \lambda_jd^{(1)}_{t_j}(f)\hat{u}^2_{1,j},
\end{equation}
where $B(\gamma)= ad+bc$,
 $\mathrm{len}(l)$ is the hyperbolic length of $l$,

\begin{equation*} g(u;f):=2\int_{ \sqrt{\mathrm{max}(u-1,0)}}^{\infty}  \frac{\left(xf\left( x^2+1  \right)\right)'}{\sqrt{x^2+1-u}}dx \end{equation*}

and, for $s=1/2+it$,
\begin{equation*} d^{(1)}_{t}(f):=\int_{0}^{\pi/2}\frac{\tan^2{v}}{\cos^2{v}} f\left(\frac{1}{\cos^2{v}}\right) \cdot {}_2F_1\left(\frac{s+1}{2},\frac{2-s}{2} \, ; \frac{3}{2} \, ; -\tan^{2}{v} \right)\, dv.\end{equation*}
\end{theorem}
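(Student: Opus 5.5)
The plan is to derive \eqref{trace} from a two-variable automorphic kernel, taking hyperbolic normal derivatives in both variables and integrating each along $l$. The spectral side then produces the squared periods $\bigl(\int_l \partial_{\boldsymbol n}u_j\,ds\bigr)^2=\lambda_j\hat u_{1,j}^2$. The geometric side unfolds over $\hGh$, and each nontrivial double coset contributes a quantity depending only on $B(\gamma)$.

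\textbf{Step 1: the kernel.} Fix a point-pair invariant $k(z,w)=k_0(\cosh \mathrm{dist}(z,w))$, built from $f$ so that the final weights come out as $g(B^2;f)$ and $d^{(1)}_t(f)$. The precise link between $k_0$ and $f$ is fixed at the end of Step 3. Form $K(z,w)=\sum_{\gamma\in\Gamma}k(z,\gamma w)$. Since $\Gamma$ is cocompact, $K(z,w)=\sum_j \hat k(t_j)u_j(z)u_j(w)$, where $\hat k$ is the Selberg/Harish-Chandra transform. Apply $\partial_{\boldsymbol n_z}\partial_{\boldsymbol n_w}$ and integrate $z,w$ over $l$.

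\textbf{Step 2: spectral side.} The constant eigenfunction has vanishing normal derivative, consistent with the factor $\lambda_j$ on the right of \eqref{trace}. To justify interchanging derivatives with the spectral sum, first take $f$ smooth with fast decay, so that $\hat k$ decays rapidly and the differentiated expansion converges absolutely by local Weyl law bounds. Then pass to continuous, piecewise differentiable $f$ with exponential decay by approximation, using that both sides of \eqref{trace} are continuous in $f$ in a suitable norm.

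\textbf{Step 3: geometric side.} Write $\Gamma$ as a union of double cosets $\langle h\rangle\gamma\langle h\rangle$. Cosets with $\gamma\notin\langle h\rangle$ have trivial stabiliser, because no element of $\Gamma$ has both diagonal entries zero, so no element swaps the endpoints of $I$. For such $\gamma$, the integral over $l\times l$ unfolds to an integral over $I\times I$ of $\partial_{\boldsymbol n_z}\partial_{\boldsymbol n_w}k(z,\gamma w)$. Parametrise $z=iy$ and $w=iy'$ and use Fermi coordinates about $I$, in which the normal derivative is $\partial_\rho$ at $\rho=0$. Then $\cosh\mathrm{dist}(iy,\gamma iy')$ is an explicit function of $y/y'$ and the entries of $\gamma$. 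After substitution the double integral collapses to one integral depending only on $B(\gamma)=ad+bc$. It is odd in $B$, which explains the prefactor $B(\gamma)$ and the sign dependence. A change of variables then puts it in the Abel-type form $2\int (xf(x^2+1))'(x^2+1-u)^{-1/2}\,dx$ with $u=B^2$. The cosets $\gamma\in\langle h\rangle$ give the identity term, which reduces to $\mathrm{len}(l)$ times a local quantity. Choosing the normalisation of $k_0$ in terms of $f$ so that this quantity equals $f(1)$ fixes the relation between $k_0$ and $f$, and gives $f(1)\mathrm{len}(l)$. Exponential decay of $f$, together with the growth of the number of double cosets with $|B|\le X$ (linear in $X$, by the counting results quoted in the introduction), gives absolute convergence.

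\textbf{Step 4: the transform.} To identify $\hat k(t)$ with $2d^{(1)}_t(f)$, I would avoid the general Selberg transform and compute directly. The map $u\mapsto \int_l\partial_{\boldsymbol n}u\,ds$ picks out, in the Fermi expansion about $I$, the component of an eigenfunction that is odd under reflection in $I$ and invariant along $I$. That component is $\sinh\rho\cdot {}_2F_1\bigl(\tfrac{s+1}{2},\tfrac{2-s}{2};\tfrac32;-\sinh^2\rho\bigr)$. With $\tan v=\sinh\rho$ this is exactly the hypergeometric factor in $d^{(1)}_t$. Integrating the kernel against this odd solution, in the form ``kernel applied to eigenfunction equals $\hat k(t)$ times eigenfunction'', yields the weight $\tan^2 v\sec^2 v\, f(\sec^2 v)$.

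I expect the main obstacle to be Step 3 together with Step 4: carrying out the double normal-derivative integral explicitly and checking that the same normalisation of $f$ produces exactly $B\,g(B^2;f)$ on the geometric side, $f(1)\mathrm{len}(l)$ for the identity term, and $2\lambda_j d^{(1)}_{t_j}(f)$ on the spectral side. Justifying convergence for merely piecewise differentiable $f$ is the secondary difficulty.
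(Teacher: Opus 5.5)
The paper does not prove this statement; it quotes it from \cite[Theorem 7(c)]{voskou2026}, so there is no in-paper argument to compare with. On its own terms your architecture is sound: automorphic kernel, double normal derivative, free stabilisers for $\gamma\notin\langle h\rangle$ (from the no-antidiagonal hypothesis together with primitivity of $h$), and the odd Fermi solution for the transform.

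\textbf{The gap.} The step you postpone is where all the content lies, and your proposed way of settling it does not work. Matching the identity term to $f(1)\,\mathrm{len}(l)$ is one scalar condition. It cannot fix a functional relation between $k_0$ and $f$, let alone one under which the same $f$ yields both $g(B^2;f)$ and $d^{(1)}_t(f)$.

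\textbf{The relation you need.} For $w$ at signed distance $\rho$ from $I$, put $\Psi(w):=\int_I\partial_{\boldsymbol{n}_z}k(z,w)\,ds(z)$. In Fermi coordinates $\cosh\mathrm{dist}=\cosh\epsilon\cosh\rho\cosh\tau-\sinh\epsilon\sinh\rho$, so $\Psi=-\sinh\rho\int_{\mathbb R}k_0'(\cosh\rho\cosh\tau)\,d\tau$. You must \emph{define} $f$ by $\Psi=\sinh\rho\,f(\cosh^2\rho)$, i.e.\ $f(A^2)=-2\int_A^\infty k_0'(y)(y^2-A^2)^{-1/2}\,dy$. This is an Abel transform, which has to be inverted for a given $f$. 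All three outputs then come from $\Psi$:
\begin{itemize}
\item The identity term is $\mathrm{len}(l)\,\partial_\rho\Psi(0)=f(1)\,\mathrm{len}(l)$.
\item Your Step 4 gives $\hat k(t)=2\int_0^\infty\Psi(\rho)\cosh\rho\,\phi_{\mathrm{odd}}(\rho)\,d\rho=2d^{(1)}_t(f)$, with $\phi_{\mathrm{odd}}$ your odd solution, so $\phi_{\mathrm{odd}}'(0)=1$.
\item On the geometric side, integrate in $z$ first, so the unfolded term is $\int_I\partial_{\boldsymbol{n}}(\Psi\circ\gamma)\,ds$. Working with $\mathrm{dist}(iy,\gamma iy')$ directly is messier, since it depends on $yy'$ as well as on $y/y'$.
\end{itemize}
For the geometric computation, $\sinh\rho=\mathrm{Re}\,w/\mathrm{Im}\,w$, so $\Psi(w)=G(\mathrm{Re}\,w/\mathrm{Im}\,w)$ with $G(x)=xf(x^2+1)$. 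For $\zeta\in\mathbb H$,
\[
\frac{\mathrm{Re}\,\gamma\zeta}{\mathrm{Im}\,\gamma\zeta}=\frac{ac|\zeta|^2+bd+B\,\mathrm{Re}\,\zeta}{\mathrm{Im}\,\zeta},
\]
so the unfolded term equals $B\int_0^\infty G'(acy+bd/y)\,dy/y$. Since $4abcd=B^2-1$, substitute $acy+bd/y=\pm\sqrt{B^2-1}\cosh\sigma$ if $|B|>1$, or $\pm\sqrt{1-B^2}\sinh\sigma$ if $|B|<1$. Together with the evenness of $G'$, this gives exactly $B\,g(B^2;f)$, and it shows where the prefactor $B$ comes from.

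\textbf{A simpler route.} Since $k$ enters only through $\Psi$, you can drop the point-pair invariant and the Abel inversion altogether. Expand $P(z)=\sum_{\gamma\in\langle h\rangle\backslash\Gamma}\Psi(\gamma z)$ spectrally, using that $\rho\mapsto\int_0^{\mathrm{len}(l)}u_j(\rho,\tau)\,d\tau$ equals $(\int_lu_j\,ds)\,\phi_{\mathrm{even}}+(\int_l\partial_{\boldsymbol{n}}u_j\,ds)\,\phi_{\mathrm{odd}}$. This gives $\langle P,u_j\rangle=2d^{(1)}_{t_j}(f)\int_l\partial_{\boldsymbol{n}}u_j\,ds$. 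It treats every admissible $f$ directly. By contrast, for $f$ with a corner, the inverted $k_0$ has $k_0''$ with inverse-square-root singularities.

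\textbf{The limiting argument is not automatic.} Suppose $f'$ jumps at $\cosh^2\rho_0$ with $\rho_0>0$. Integrating by parts twice via $(\cosh\rho\,\phi_{\mathrm{odd}}')'=-\lambda\cosh\rho\,\phi_{\mathrm{odd}}$ shows that $\lambda d^{(1)}_t(f)$ contains a multiple of $\phi_{\mathrm{odd}}(\rho_0)$. This oscillates with amplitude of order $t^{-1}$, while $\sum_{t_j\le T}\hat u_{1,j}^2$ grows linearly in $T$. So the spectral side need not converge absolutely, and ``continuity in a suitable norm'' requires a genuine uniform tail bound. Lemma \ref{estimates}(d) supplies such a bound for the $f_X$ used in the proof of Theorem \ref{almper}.
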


We apply Theorem \ref{modtrace} for $f=f_X=f_4(\,\cdot\,;X,D=1/2)$, where $f_4$ and $D$ are as in \cite[Section 5]{voskou2026}. We write $d_{t}^{(1)}(X):=d_t^{(1)}(f_X)$. 
As  in \cite[Section 5]{voskou2026}, we will have

$$ g\left(u^2+1;f\right) = \left\{
\begin{array}{lr}
      \displaystyle (pu^2+q), & 0< u \leq \sqrt{2}, \\
     
     \displaystyle \frac{u^{-1}}{R-r}\int_{r}^{R} \mathbbm{1}(u<v) \, dv, & \phantom{0}\phantom{<}u>\sqrt{2}, \\
\end{array} 
\right. $$
where $R=\sqrt{9X^2/4-1}, r=\sqrt{X^2-1}$ and
\begin{equation*}p 
= \frac{3\sqrt{2}}{8}\left( \frac{R\log{R}-r\log{r}}{R-r}-\frac{\log{2}}{2}\right), \qquad
q=\frac{1}{\sqrt{2}}-2p.
\end{equation*}

The contribution of $|B(\gamma)| \leq \sqrt{3}$ in the left-hand side of equation (\ref{trace}) is clearly $O(p)=O(\log{X})$. The contribution of $|B(\gamma)|>\sqrt{3}$ becomes
$$\frac{1}{R-r}\sum_{|B(\gamma)|>\sqrt{3}}\frac{B(\gamma)}{\sqrt{B^2(\gamma)-1}}\int_{r}^{R} \mathbbm{1}\left(\sqrt{B^2(\gamma)-1}<v\right) \, dv.$$
Recall now that, for $|x| >\sqrt{3}$, we have
$$
\frac{x}{\sqrt{x^2-1}}
=\operatorname{sgn}(x)+O(x^{-2}).
$$
Hence, we have
\begin{align*}&\sum_{|B(\gamma)|>\sqrt{3}}\frac{B(\gamma)}{\sqrt{B^2(\gamma)-1}}\int_{r}^{R} \mathbbm{1}\left(\sqrt{B^2(\gamma)-1}<v\right) \, dv \\
&=\sum_{|B(\gamma)|>\sqrt{3}} \hbox{sign}(ad)\int_{r}^{R} \mathbbm{1}\left(\sqrt{B^2(\gamma)-1}<v\right) \, dv +O(X) \\
&=\sum_{|B(\gamma)|>\sqrt{3}}\hbox{sign}(ad)\int_{X}^{3X/2} \mathbbm{1}\left(|B(\gamma)|<t\right) \frac{t \, dt}{\sqrt{t^2-1}} +O(X) \\
&=\sum_{|B(\gamma)|>\sqrt{3}}\hbox{sign}(ad)\int_{X}^{3X/2} \mathbbm{1}\left(|B(\gamma)|<t\right) \, dt +O(X\log{X}), \end{align*}
 where, for the last equality, we used 
 that 
 \begin{align*}\sum_{|B(\gamma)|>\sqrt{3}}\hbox{sign}(ad)\int_{X}^{3X/2} \mathbbm{1}\left(|B(\gamma)|<t\right) \left( \frac{t}{\sqrt{t^2-1}}-1\right) dt \ll N(X)\int_{X}^{3X/2} t^{-1} dt \ll X\log{X}. 
\end{align*}
To change the growth parameter from $|B(\gamma)|$ to $|ad|$, note that
\begin{align*}
&\sum_{|B(\gamma)|>\sqrt{3}}\hbox{sign}(ad)\int_{X}^{3X/2} \mathbbm{1}\left(|B(\gamma)|<t\right) \, dt\\
&=\sum_{|B(\gamma)|>\sqrt{3}}\hbox{sign}(ad)\int_{X}^{3X/2} \mathbbm{1}\left(2|ad|<t\right) \, dt + O\left(\sum_{|B(\gamma)|<3X/2+1}\int_{X}^{3X/2} \mathbbm{1}\left(|t-2|ad||<1\right) \, dt\right)  \\
%&= \int_{X}^{3X/2} N_{\Delta}(t/2) \frac{t}{\sqrt{t^2-1}} \, dt + O\left(X\right) \\
&= \int_{X}^{3X/2} N_{\Delta}(t/2) \, dt + O\left(X\right) \\
&= 2\int_{X/2}^{3X/4} N_{\Delta}(u) \, du + O\left(X\right).
\end{align*}

Hence, we have
$$\frac{2}{R-r}\int_{X/2}^{3X/4}N_{\Delta}(u)du=2\sum_{j} \lambda_jd^{(1)}_{t_j}( X)\hat{u}^2_{1,j}+O(\log{X}).$$
As $R-r=X/2+O(X^{-1})$,
it follows that
$$\overline{N_{\Delta}}(\log{X})=X^{-3/2}\int_{X}^{3X/2}N_{\Delta}(u)du=X^{-1/2}\sum_{j} \lambda_jd^{(1)}_{t_j}( 2X)\hat{u}^2_{1,j}+O(X^{-1/2}\log{X}).$$

We will now prove the following estimates for the coefficients $d^{(1)}_{t_j}(X)$ and the coefficients $b(t_j)$.
\begin{lemma} \label{estimates}
 The following estimates hold:
    \begin{itemize}
  
    \item[\textup{(a)}] For fixed $s=1/2+it$ with $t>0$, we have \begin{align*}s(1-s)d^{(1)}_{t}(2X) X^{-1/2}%&=b(t)X^{it}+b(-t)X^{-it}+O_{t}\left(X^{-1/2}\log{X} \right) \\
    %&
    = \textup{Re}\left(b(t)X^{it}\right)+O_{t}\left(X^{-1/2}\log{X} \right).\end{align*}
     \item[\textup{(b)}] For fixed $s_j \in (1/2,1)$, we have $$s_j(1-s_j) d^{(1)}_{t_j}(2X) X^{-1/2}=-\alpha(s_j)X^{s_j-1/2}
     %-\alpha(1-s_j)X^{1/2-s_j}
     +o_{s_j}\left(1 \right).$$
   \item[\textup{(c)}] For $s_j=1/2$, we have $$\frac{1}{4} d^{(1)}_{0}(2X) X^{-1/2}=-\kappa\left(\log{X}-\eta\right)+O\left(X^{-1/2}\log{X} \right),$$
   where $\kappa$ and $\eta$ are positive constants.
    \item[\textup{(d)}] For $T>1$ and $X$ large enough,
    $$ X^{-1/2}\sum_{t_j>T} \lambda_j\left|d^{(1)}_{t_j}(2X)\right|\hat{u}^2_{1,j} \ll T^{-1/2}.$$
    \item[\textup{(e)}] We have $b(t) \ll t^{-3/2}$, and therefore $$\sum_{t_j>T}|b(t_j)|\hat u_{1,j}^2 \ll T^{-1/2}.$$
\end{itemize}
\end{lemma}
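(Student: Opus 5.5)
The approach is to compute the transform $d^{(1)}_t(f_X)$ explicitly, or at least asymptotically, through the Mellin-type structure of $f_4$. We do not have the explicit form of $f_4(\cdot\,;X,1/2)$ here. It is determined, however, by the requirement that $g(u^2+1;f)$ equal the profile displayed above: the average over $v\in[r,R]$ of $u^{-1}\mathbbm 1(u<v)$ for $u>\sqrt2$, with a quadratic cap near $0$. Since $g$ is an Abel-type transform of $(xf(x^2+1))'$, we would write $d^{(1)}_t$ directly as an integral against $g$, via the dual transform of the hypergeometric kernel. Concretely, we use \cite[Section 5]{voskou2026}, where for the sharp cutoff $\mathbbm 1(u<v)$ one has
\[
\lambda\, d^{(1)}_t = -\tfrac12\big(c(s)v^{s}+c(1-s)v^{1-s}\big)+O(\log v)
\]
or an analogous explicit expression with a bounded-$t$-uniform error. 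This is the same computation that produces the main terms $-c(s_j)\hat u_{1,j}^2X^{s_j}$ in Theorem \ref{refcount}. Averaging in $v$ over $[r,R]$ and dividing by $R-r\approx X/2$ turns $v^{s}$ into
\[
\frac{1}{R-r}\int_r^R v^s\,dv=\frac{(3X/2)^{s+1}-X^{s+1}}{(s+1)(X/2)}+O(\cdot),
\]
which, after replacing $X$ by $2X$ and multiplying by $X^{-1/2}$, gives exactly $\alpha(s)X^{s-1/2}$ with the stated $\alpha$. The cap region $u\le\sqrt2$ only contributes $O(p)=O(\log X)$, hence $O(X^{-1/2}\log X)$ after normalisation.

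Each part then follows in turn. For (b) we take $s\in(1/2,1)$. The $v^{1-s}$ term gives $O(X^{1/2-s})=o(1)$, and the $v^s$ term gives $-\alpha(s_j)X^{s_j-1/2}$. For (a) we take $s=1/2+it$. The two terms $c(s)v^s$ and $c(\bar s)v^{\bar s}$ are complex conjugates. Their sum after averaging is $-2\,\mathrm{Re}(\alpha(s)X^{it})=\mathrm{Re}(b(t)X^{it})$, since $b=-2\alpha$. For (c), at $s=1/2$ the two terms coalesce, and $c(s)$ has a pole from $\Gamma(2s-1)$ cancelled by the difference. We therefore take the limit $t\to0$ of $c(s)v^s+c(1-s)v^{1-s}$, which produces $v^{1/2}(A\log v+B)$. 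Averaging gives $X^{1/2}(A'\log X+B')$. The positivity of $\kappa$ comes from the sign of the residue of $c$ at $s=1/2$: we have $c(s)\sim \pi(1/2)\cdot\frac{1}{2(s-1/2)}/(4^{-1/2}\Gamma^4(3/4))$. Positivity of $\eta$ needs the explicit constant, and we would compute it with digamma values at $3/4$ and from the averaging factor $\frac{(3/2)^{3/2}\log(3/2)}{\cdots}$.

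For (e), Stirling applied to $c(1/2+it)$ gives
\[
|\Gamma(2it)|\asymp |t|^{-1/2}e^{-\pi|t|}, \qquad |\Gamma((3/2+it)/2)|^4\asymp |t|^{2}e^{-\pi|t|},
\]
so $|c|\asymp |t|\cdot|t|^{-1/2}\cdot|t|^{-2}=|t|^{-3/2}$. Dividing by $|s+1|\asymp t$ would give even $t^{-5/2}$, but the numerator $(3/2)^{s+1}-1$ is bounded, so we get at least $t^{-3/2}$. The sum bound then follows by partial summation from a local Weyl-type law for the periods, $\sum_{t_j\le T}\hat u_{1,j}^2\ll T$. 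This follows from the normal-derivative period bound $\int_l\partial_n u_j\ll t_j$ on average, that is $\sum_{t_j\le T}|\int_l\partial_nu_j|^2\ll T^3$, which we take from \cite{voskou2026} or prove via the relative trace formula with a positive test function. Dividing by $\lambda_j\asymp t_j^2$ gives $\sum_{t_j\le T}\hat u_{1,j}^2\ll T$. Then $\sum_{t_j>T} t_j^{-3/2}\hat u_{1,j}^2\ll T^{-1/2}$.

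The main obstacle is (d), which needs uniformity in $t$. Here we need $\lambda X^{-1/2}|d^{(1)}_t(2X)|\ll t^{-3/2}$ uniformly for $X$ large. The smoothness of the $v$-average (width $\asymp X$) gives extra decay, as one integration by parts in $v$ gains a factor $1/t$. However, the cap region and the hypergeometric asymptotics must be controlled uniformly. We would use the bound on $d^{(1)}_t$ for the smoothed $f_4$ from \cite[Section 5]{voskou2026}, namely $\lambda d^{(1)}_t\ll X^{1/2}t^{-3/2}+t^{-1/2}\log X$-type. If needed, we would re-derive it by writing the kernel via the Mehler–type integral representation of ${}_2F_1$ and integrating by parts once in the averaging variable. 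Combined with the period Weyl law, this yields the $T^{-1/2}$ bound.
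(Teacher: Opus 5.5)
Your overall architecture matches the paper's. Parts (a) and (b) come from the connection-formula asymptotics of \cite{voskou2026}, and parts (d) and (e) from a pointwise bound in $t$, the local Weyl law for the periods, and a dyadic decomposition. There is one bookkeeping slip in (a) and (b). The trace formula cuts off at $|B(\gamma)|<v$, i.e.\ $|ad|<v/2$, so the sharp-cutoff main term is $-\frac12c(s)(v/2)^s$. With $v^s$ as you wrote it, your average comes out as $2^s\alpha(s)X^{s-1/2}$, not $\alpha(s)X^{s-1/2}$.

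The genuine gap is in (c). Letting $t\to0$ in $c(s)v^s+c(1-s)v^{1-s}$ tells you nothing about $d^{(1)}_0$ unless the error term stays $O(\log X)$ uniformly as $s\to\frac12$. You simply posit a ``bounded-$t$-uniform error'', and that is exactly what is not available. In the connection formula for ${}_2F_1\bigl(\frac{s+1}{2},\frac{2-s}{2};\frac32;-\tan^2v\bigr)$ the parameter difference is $s-\frac12$. The coefficients $\Gamma(\pm(s-\frac12))$ therefore blow up, and the two tails can only be bounded uniformly on compact sets avoiding $s=\frac12$.

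The missing idea is the paper's holomorphy argument. Because the simple poles of $\alpha(\frac12\pm z)$ cancel, the function
\[
F(z)=\Bigl(\tfrac14-z^2\Bigr)d^{(1)}_{-iz}(2X)X^{-1/2}+\alpha\bigl(\tfrac12+z\bigr)X^{z}+\alpha\bigl(\tfrac12-z\bigr)X^{-z}
\]
is holomorphic at $z=0$. The maximum modulus principle on $|z|=\epsilon$, where the uniform estimate does hold, then gives $F(0)=O(X^{-1/2}\log X)$. The Laurent expansion $\alpha(\frac12+z)=\frac{\kappa}{2z}-\frac{\kappa\eta}{2}+O(z)$ identifies $F(0)$ with $\frac14d^{(1)}_0(2X)X^{-1/2}+\kappa(\log X-\eta)$. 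It also gives $\eta$ explicitly, which you left uncomputed.

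There are two further problems.

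In (d) you correctly identify the target $\lambda X^{-1/2}|d^{(1)}_t(2X)|\ll t^{-3/2}$. However, the bound you propose to quote, $\lambda d^{(1)}_t\ll X^{1/2}t^{-3/2}+t^{-1/2}\log X$, does not deliver it. After the factor $X^{-1/2}$, its second term contributes $X^{-1/2}\log X\sum_{t_j>T}t_j^{-1/2}\hat u_{1,j}^2$. This is not controlled by $\sum_{t_j\le T}\hat u_{1,j}^2\ll T$, and it diverges if that growth is sharp. Every non-main term in the pointwise bound must decay faster than $t^{-1}$; the paper imports the required bound from \cite[Lemma 6(b)]{voskou2026}.

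In (e) your Stirling computation is wrong. One has $|\Gamma(\frac34+\frac{it}{2})|^4\asymp|t|e^{-\pi|t|}$, not $|t|^2e^{-\pi|t|}$, so $|c(\frac12+it)|\asymp|t|^{-1/2}$. The decay $b(t)\ll t^{-3/2}$ comes precisely from the factor $1/(s+1)$, which you then discarded. Your final bound is correct only because these two slips cancel.
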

\begin{proof}[Proof of Lemma \ref{estimates}]
First, note that using \cite[equation (32) and Lemma 3(b)]{voskou2026}, and following the argument in the proof of \cite[Lemma 8(b)]{voskou2026}, we have that \begin{equation} \label{compactest}s(1-s)d^{(1)}_{t}(2X) X^{-1/2}
=-\alpha(s)X^{s-1/2}
-\alpha(1-s)X^{1/2-s}
+O_{\mathcal S}(X^{-1/2}\log X),
\end{equation}
uniformly for $s$ in any compact subset
$
\mathcal S\subset\left\{s\in\mathbb C:0<\hbox{Re} (s)<1,\ s\ne\frac{1}{2}\right\}.
$
The uniformity follows from the locally uniform convergence of the hypergeometric series occurring in the connection formula. Part (a) and part (b) then follow immediately.
    
    For part (c), we first calculate the Laurent expansion of $\alpha(1/2+z)$. Using the definition of $\alpha$ from equation (\ref{coefb}), we have
    $$\alpha\left( \frac{1}{2}+z \right)=\frac{\kappa}{2z}-\frac{\kappa\eta}{2}+O(z),
$$
where $$
\kappa=\frac{\Gamma(1/4)(3^{3/2}-2^{3/2})}
{6\Gamma(3/4)^3}>0, \quad \hbox{and} \quad
\eta=\pi+\frac{8}{3}-4\log2-\frac{3^{3/2}\log(3/2)}{3^{3/2}-2^{3/2}}>0.
$$

Hence, for a fixed $X$, the expression $$\alpha\left( \frac{1}{2}+z \right)X^{z}+\alpha\left( \frac{1}{2}-z \right)X^{-z}$$ is holomorphic near $z=0$. 

But $\left(\frac{1}{4}-z^2\right)d^{(1)}_{-iz}(2X) X^{-1/2}$ is also holomorphic by the definition of $d^{(1)}$. Therefore,
the function $$F(z)=\left(\frac{1}{4}-z^2\right)d^{(1)}_{-iz}(2X) X^{-1/2}+\alpha\left( \frac{1}{2}+z \right)X^{z}+\alpha\left( \frac{1}{2}-z \right)X^{-z} $$ is holomorphic near $z=0$, which using the Laurent expansion above gives, for $z=0$, 
$$
F(0)
=\frac14d_0^{(1)}(2X)X^{-1/2}
+\kappa(\log X-\eta).
$$
By the maximum modulus principle in the region $|z|< \epsilon$ for some fixed, small enough $\epsilon$, we have 
$|F(0)| \ll \sup_{|z|=\epsilon}|F(z)|.$ On the other hand, by equation (\ref{compactest}), we have $\sup_{|z|=\epsilon}|F(z)| \ll X^{-1/2}\log{X}$. Hence, $F(0)=O(X^{-1/2}\log X)$, which finishes the proof of part (c).

    Part (d) follows from \cite[Lemma 6(b)]{voskou2026} combined with \cite[Lemma 13]{voskou2026} by a dyadic decomposition.
    Finally, for part (e), we apply Stirling's approximation formula to get $b(t) \ll t^{-3/2}$, and then we proceed as in part (d).
\end{proof}

We are ready to prove Theorem \ref{almper}.
\begin{proof}[Proof of Theorem \ref{almper}]
   
Consider the quantity
$$D=\limsup_{X \rightarrow \infty}\left| X^{-1/2}\sum_{t_j>0} \lambda_jd^{(1)}_{t_j}(2X)\hat{u}^2_{1,j} - \textup{Re}\sum_{t_j>0}b(t_j)\hat u_{1,j}^2X^{it_j}\right|. $$
By Lemma \ref{estimates}(a),(d),(e) we can see that, for every $T>1$, we have
$D \ll T^{-1/2}.$ Taking $T \rightarrow \infty$, we have $D=0$. By Lemma \ref{estimates}(b),(c), the contribution of the eigenvalues with $s_j \in [1/2,1)$ will tend to $$-\sum_{1/2<s_j<1}
\alpha(s_j)\hat u_{1,j}^2X^{s_j-1/2}
-\kappa \left(\sum_{t_j=0}\hat{u}_{1,j}^2\right)(\log{X}-\eta).$$
By combining the above, the first assertion of the theorem follows.

The inequality $$\sum_{t_j>0}|b(t_j)|\hat{u}_{1,j}^2< \infty$$ follows directly from Lemma \ref{estimates}(e).
\end{proof}

\section{Proof of Theorem \ref{changes}}
Meher and Murty \cite[Theorem 1.1]{murphy} give a criterion
for sign changes of a real sequence based on estimates for its first and second
moments. We prove Theorem \ref{changes}(b) using a
continuous analogue of their argument.
To this end, we study the asymptotic first and second moments of the function $\overline{N_{\Delta}}(y)$.
\begin{lemma}\label{asmom}
    Suppose that $\hat{u}_{1,j}=0$ for every $j$ with
    $0<\lambda_j\leq 1/4$. Then
    $\overline{N_{\Delta}}$ is bounded on $[0,\infty)$,
    and there exists a constant $L>0$ such that
    $$
        \lim_{H\to\infty}\sup_{Y\geq 0}
        \left|
            \frac{1}{H}\int_Y^{Y+H}
            \overline{N_{\Delta}}(y)\,dy
        \right|=0
    \quad
    \hbox{and}
    \quad
        \lim_{H\to\infty}\sup_{Y\geq 0}
        \left|
            \frac{1}{H}\int_Y^{Y+H}
            \overline{N_{\Delta}}(y)^2\,dy-L
        \right|=0.
    $$
\end{lemma}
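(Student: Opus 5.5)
Under the hypothesis, Theorem \ref{almper} simplifies. No $s_j\in(1/2,1)$ and no $t_j=0$ carry a nonzero period (the trivial eigenvalue is excluded since $\hat u_{1,j}$ is only defined for $\lambda_j\neq 0$, and the constant eigenfunction has zero normal derivative). Hence
$$\overline{N_{\Delta}}(y)=P(y)+E(y),\qquad P(y):=\textup{Re}\sum_{t_j>0}b(t_j)\hat u_{1,j}^2e^{it_jy},\qquad E(y)\to 0 \ (y\to\infty).$$
By (\ref{best}) the series defining $P$ converges absolutely and uniformly, so $P$ is a uniformly almost-periodic function with $|P|\le \sum|b(t_j)|\hat u_{1,j}^2<\infty$. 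Boundedness of $\overline{N_{\Delta}}$ on $[0,\infty)$ then follows in three pieces:
\begin{itemize}
\item on $[M,\infty)$ it is bounded by $\sup|P|+1$, once $M$ is chosen so that $|E|\le 1$ there;
\item on $[0,M]$, i.e.\ $X\in[1,e^M]$, it is bounded trivially, since $|N_\Delta(u)|\le N(u)$ is bounded for $u\le 3e^M/2$.
\end{itemize}

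For the moments, I will first reduce to $P$. Set $B=\sup|\overline{N_{\Delta}}|+\sup|P|$. For any $\epsilon>0$ choose $M$ with $|E|<\epsilon$ on $[M,\infty)$. For every $Y\ge 0$,
$$\frac1H\int_Y^{Y+H}|E|\le \frac{MB}{H}+\epsilon,$$
uniformly in $Y$, and the same bound (with a factor $2B$) holds for $|\overline{N_{\Delta}}^2-P^2|\le 2B|E|$. So it suffices to prove both limits with $P$ in place of $\overline{N_{\Delta}}$, uniformly in $Y\in\mathbb R$.

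Next I truncate. Let $P_T$ be the finite sum over $t_j\le T$. By Lemma \ref{estimates}(e), $\sup|P-P_T|\ll T^{-1/2}$, and therefore also $\sup|P^2-P_T^2|\ll T^{-1/2}$.

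\textbf{First moment.} For the trigonometric polynomial $P_T$, each term satisfies
$$\left|\frac1H\int_Y^{Y+H}e^{it_jy}\,dy\right|\le \frac{2}{t_jH}.$$
Since there are finitely many $t_j\le T$, all bounded away from $0$, the mean of $P_T$ is $O_T(1/H)$ uniformly in $Y$. Letting $H\to\infty$ and then $T\to\infty$ gives the first limit.

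\textbf{Second moment.} Group the distinct frequencies: write $P_T(y)=\textup{Re}\sum_{\tau\le T}\beta_\tau e^{i\tau y}$, where $\beta_\tau=\sum_{t_j=\tau}b(t_j)\hat u_{1,j}^2$. Expanding the square gives
$$P_T^2=\frac12\sum_{\tau}|\beta_\tau|^2+\text{(terms } e^{\pm i(\tau\pm\tau')y}\text{ with nonzero frequency)}.$$
Here I use that $\tau,\tau'>0$, so $\tau+\tau'\neq0$, and that $\tau-\tau'\neq 0$ for distinct frequencies. The oscillating terms average to $O_T(1/H)$ uniformly in $Y$. So I set
$$L=\frac12\sum_\tau|\beta_\tau|^2,$$
which is finite because $\sum_\tau|\beta_\tau|\le\sum_j|b(t_j)|\hat u_{1,j}^2<\infty$. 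Then $|L_T-L|\to 0$ as $T\to\infty$, and the second limit follows as for the first moment.

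The main obstacle is proving $L>0$. Since $L=\tfrac12\sum_\tau|\beta_\tau|^2$, this amounts to showing that some $\beta_\tau\neq 0$. Two facts are needed:
\begin{itemize}
\item \emph{Same sign within a frequency class.} All $\hat u_{1,j}^2\ge 0$ with the common factor $b(\tau)$, so $\beta_\tau=b(\tau)\sum_{t_j=\tau}\hat u_{1,j}^2$ and no cancellation occurs between eigenfunctions sharing an eigenvalue. It then suffices to have some $j$ with $t_j>0$, $\hat u_{1,j}\ne0$ and $b(t_j)\ne 0$.
\item \emph{Existence of such $j$.} If every $\hat u_{1,j}$ vanished, the spectral side of (\ref{trace}) would be identically zero for all test functions. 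That contradicts the left-hand side of (\ref{trace}) for a suitable $f$ (e.g.\ $f$ with $f(1)\neq 0$ supported near $1$, where only the identity contributes and $f(1)\,\mathrm{len}(l)\ne0$). Alternatively, Theorem \ref{refcount} with $N(X)\to\infty$ and discreteness may be used.
\end{itemize}
What remains is to check that $b(t)=-2\alpha(1/2+it)\neq0$ at the relevant $t_j$. From (\ref{coefb}) this requires $(3/2)^{3/2+it}\ne 1$, which always holds since $|(3/2)^{3/2+it}|>1$, and $c(1/2+it)\neq 0$. The latter holds because Gamma has no zeros and $1-s\neq0$.
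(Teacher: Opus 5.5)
Your proposal is correct and follows the paper's proof essentially step for step. The shared steps are the decomposition $\overline{N_{\Delta}}=A+o(1)$ from Theorem \ref{almper}, truncation of the absolutely convergent series together with the uniform bound $2/(H|\omega|)$ for nonconstant exponentials, $L=\frac12\sum_t|a_t|^2$ after grouping equal spectral parameters, and positivity of $L$ via the relative trace formula with $f$ concentrated near $1$.

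One small correction to that last step: for $f$ supported in $[1,1+\epsilon]$ the identity is not the only contributor. The finitely many double cosets with $|B(\gamma)|<1$, which come from self-intersections of $l$, have lower limit $0$ in $g(B^2(\gamma);f)$. They therefore give a generally nonzero contribution, though one that vanishes as $\epsilon\to 0$, which is why the paper writes $f(1)\,\mathrm{len}(l)+o(1)$.
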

\begin{proof}
    Write
    $$
A(y):=\hbox{Re}\sum_{t_j>0}b(t_j)\hat{u}_{1,j}^{\,2}e^{it_jy}.
    $$
    By Theorem \ref{almper} and the hypothesis on the small eigenvalues,
    $$
        \overline{N_{\Delta}}(y)=A(y)+r(y),
        \qquad r(y)=o(1).
    $$
    Moreover, by equation (\ref{best}), the series defining $A$ converges absolutely and
    uniformly on $\mathbb R$. In particular, $A$ is bounded.
    It follows
    that $\overline{N_{\Delta}}$ is also bounded.

    We first establish the required uniform moment limits
    for $A$. Grouping terms with the same spectral parameter,
    put
    $$
        a_t:=b(t)\sum_{t_j=t}\hat{u}_{1,j}^{\,2},
    $$
    where $t$ ranges over the distinct positive spectral
    parameters. Then
    $$
        A(y)=\hbox{Re}\sum_{t>0}
        a_te^{ity},
        \qquad
        \sum_{t>0}|a_t|<\infty.
    $$
    For every nonzero real number $\omega$,
    $$
        \left|
            \frac1H\int_Y^{Y+H}e^{i\omega y}\,dy
        \right|
        =
        \left|
            \frac{e^{i\omega Y}(e^{i\omega H}-1)}
                 {i\omega H}
        \right|
        \leq \frac{2}{H|\omega|}.
    $$
    Thus the average of each nonconstant exponential tends
    to zero uniformly in $Y$.
Therefore,
     $$
        \lim_{H\to\infty}\sup_{Y\in\mathbb R}
        \left|
            \frac1H\int_Y^{Y+H}A(y)\,dy
        \right|=0.
    $$
    Indeed, truncate the series so that the sum of the
    absolute values of the omitted coefficients is less
    than $\epsilon$.  The finitely
    many remaining nonconstant terms tend uniformly to zero
    by the preceding estimate.

    Similarly, the double series obtained by expanding $A(y)^2$ is
    also absolutely and uniformly convergent, and we have
    $$
        \lim_{H\to\infty}\sup_{Y\in\mathbb R}
        \left|
            \frac1H\int_Y^{Y+H}A(y)^2\,dy-L
        \right|=0,
    $$
    where
    \begin{equation}\label{ldef}
        L:=\frac12\sum_{t>0}|a_t|^2
        =\frac12\sum_{t>0}|b(t)|^2
        \left(\sum_{t_j=t}\hat{u}_{1,j}^{\,2}\right)^2.
    \end{equation}

    We next transfer these limits from $A$ to $\overline{N_{\Delta}}$.
    Given $\epsilon>0$, choose $R>0$ such that
    $|r(y)|\leq\epsilon$ for $y\geq R$. For every
    $Y\geq0$,
    $$
        \frac1H\int_Y^{Y+H}|r(y)|\,dy
        \leq
        \frac1H\int_0^R|r(y)|\,dy+\epsilon.
    $$
    Consequently,
    $$
        \lim_{H\to\infty}\sup_{Y\geq0}
        \frac1H\int_Y^{Y+H}|r(y)|\,dy=0.
    $$
    Furthermore, since $\overline{N_{\Delta}}(y)$ and $A$ are bounded on $[0,\infty)$, there
    is a constant $K>0$ such that
    $$
        |\overline{N_{\Delta}}(y)^2-A(y)^2|
        =|r(y)|\,|\overline{N_{\Delta}}(y)+A(y)|
        \leq K|r(y)|.
    $$
    The asserted uniform limits for $\overline{N_{\Delta}}$ and $\overline{N_{\Delta}}^2$
    now follow from those for $A$ and $A^2$.
    
 It remains to show that $L>0$. It is enough to show that at least one of the summands in equation (\ref{ldef}) is not zero. To show this, note that $b(t) \neq 0$ for all $t>0$, and hence we need $\hat{u}_{1,j} \neq 0$ for at least one $j$. If this is not the case, the right-hand side of Theorem \ref{modtrace} vanishes, meaning that the left-hand side must also vanish for any choice of $f$. However, it is clear that this is not the case, as we can choose $f$ to be a function supported in an interval $[1,1+\epsilon]$, and with $f(1)=1$. This would make the left-hand side equal to $f(1) \hbox{len}(l)+o(1)$, which is nonzero for $\epsilon$ small enough, contradicting our assumption. Hence, $L>0$.
   
\end{proof}

We now finish the proof of Theorem \ref{changes}(b) and, therefore, of Theorem \ref{changes}(c) (recall the reduction from Remark \ref{thm4c}).

\begin{proof}[Proof of Theorem \ref{changes}(b),(c)]
    Assume the hypotheses of part (b).
    By Lemma \ref{asmom}, there are constants $M>0$
    and $L>0$ such that $|\overline{N_{\Delta}}(y)|\leq M$ for $y\geq0$,
    and the first two moments converge uniformly over
    moving intervals to $0$ and $L$, respectively.
    Hence we may fix $H>0$ sufficiently large so that,
    for every $Y\geq0$,
    $$
        \left|\frac1H\int_Y^{Y+H}\overline{N_{\Delta}}(y)\,dy\right|
        <\frac{L}{4M},
        \qquad
        \frac1H\int_Y^{Y+H}\overline{N_{\Delta}}(y)^2\,dy>\frac L2.
    $$
    Since $\overline{N_{\Delta}}(y)^2\leq M|\overline{N_{\Delta}}(y)|$, it follows that
    $$
        \frac1H\int_Y^{Y+H}|\overline{N_{\Delta}}(y)|\,dy>\frac{L}{2M}.
    $$
   % Put $\overline{N_{\Delta}^{\pm}}(y):=\max\{\pm\overline{N_{\Delta}}(y),0\}=(|\overline{N_{\Delta}}|\pm \overline{N_{\Delta}})/2.$ We have
  %  $$
  %      \frac{1}{H}\int_Y^{Y+H}\overline{N_{\Delta}^{\pm}}(y)\,dy
  %      >\frac{L}{8M}.
   % $$
   Therefore,
    $$    \frac{1}{H}\int_Y^{Y+H}\left| \overline{N_{\Delta}}(y)\,dy\right|\pm\overline{N_{\Delta}}(y)\,dy
        >\frac{L}{4M}.
    $$
    Thus, with $\delta:=L/(8M)$, every interval
    $[Y,Y+H]$ contains points $y_+$ and $y_-$ such that
    $$
        \overline{N_{\Delta}}(y_+)>\delta,
        \qquad \overline{N_{\Delta}}(y_-)<-\delta.
    $$
Equivalently, for every $X >1$, the interval $[X,e^{H}X]$ contains points $U_{\pm}$ such that
$$
        \int_{U_+}^{3U_+/2}N_\Delta(u)\,du
        >\delta U_+^{3/2},
        \qquad
        \int_{U_-}^{3U_-/2}N_\Delta(u)\,du
        <-\delta U_-^{3/2}.
    $$
  %  Let $Z\geq1$, take $Y=\log Z$, and put
    %$U_\pm=e^{y_\pm}$. Then
  %  $Z\leq U_\pm\leq e^H Z$, and the definition of $\overline{N_{\Delta}}$
  %  gives
  %  $$
        %\int_{U_+}^{3U_+/2}N_\Delta(u)\,du
       % >\delta U_+^{3/2},
       % \qquad
      %  \int_{U_-}^{3U_-/2}N_\Delta(u)\,du
     %   <-\delta U_-^{3/2}.
    %$$
    Consequently, there are points
    $x_\pm\in[U_\pm,3U_\pm/2] \subset [X,2e^{H}X]$ satisfying
    $$
        N_\Delta(x_+)>\delta\sqrt{U_+}
        \geq\delta\sqrt X,
        \qquad
        N_\Delta(x_-)<-\delta\sqrt{U_-}
        \leq-\delta\sqrt X.
    $$
  %  In particular, both points lie in
  %  $[Z,C_0Z]$, where $C_0:=3e^H/2$.
%
   % Finally, set
   % $$
   %     K:=\max\{1,\delta^{-2}\},
   %     \qquad C:=KC_0,
   % $$
   % and apply the preceding conclusion with $Z=KX$.
   % Since $[KX,C_0KX]\subseteq[X,CX]$, we obtain
   % points $x_+,x_-\in[X,CX]$ such that
   % $$
   %     %N_\Delta(x_+)>\sqrt X,
       % \qquad N_\Delta(x_-)<-\sqrt X.
   % $$
   % This proves part (b).
%
    %Since $x_\pm\leq CX$, these inequalities also give
%$N_\Delta(X)=\Omega_\pm(X^{1/2})$.
    %Applying the assertion to the disjoint intervals
    %$$
        %[X_*C^{2k},X_*C^{2k+1}],
      %  \qquad k=0,1,2,\ldots,
    %$$
    %with $X_*$ sufficiently large, gives
    %$\gg\log X$ sign changes up to $X$.
    Part (b) now follows from rescaling. 
    
    Part (c) follows from part (b) and the contribution
    of the small eigenvalues, as explained in Remark \ref{thm4c}.
\end{proof}

\section{Proof of Theorem \ref{bias}}

We use the construction from \cite{Cheeger+1971+195+200} (see also \cite[Theorem 25]{Burrin2021} and \cite[Chapter 8]{Bus92}) to find, for a fixed $\epsilon>0$, a group $\Gamma(\epsilon)$ with at least one eigenvalue $\lambda=s(1-s)$ with $s > 1-\epsilon$ and at least one associated eigenfunction with nonvanishing normal-derivative period. %In fact, it is enough to show that $\hat{u}_{1,1,\epsilon}/\epsilon$ does not tend to $0$ (unfortunately we do know that $\hat{u}_{1,1,\epsilon} \rightarrow 0$).
Fix a cocompact Fuchsian group $\tilde{\Gamma}$. We will show that $\Gamma(\epsilon)$ can be chosen to be the conjugate of a normal subgroup of $\tilde{\Gamma}$, with large enough index.

By a lemma of Selberg \cite[Lemma 8]{selberg2}, $\tilde{\Gamma}$ has a torsion-free normal subgroup $\Gamma$ of finite index.
Then, $M=\Gamma\backslash\mathbb {H}$ is a closed hyperbolic surface of genus at least two. Let $ \mathcal{C}\subset M $ be an oriented nonseparating simple closed geodesic, corresponding to an element $h$ of $\Gamma$. 

We cut $M$ along $\mathcal{C}$ to obtain a surface $M'$ with $\partial M'=\mathcal{C}^+ \cup \mathcal{C}^{-}$, where $\mathcal{C}^{\pm}$ are two copies of $\mathcal{C}$ with opposite orientation. We consider a family of copies of $M'$, denoted by $(M'_i)_i$. We write $\mathcal{C}^{\pm}_i$ for the corresponding copies of the curves $\mathcal{C}^{\pm}$. For every $n$, let $M_{n}$ be the surface we get by gluing together $M'_1,\ldots,M'_n$ by identifying $\mathcal{C}^+_i$ with $\mathcal{C}^-_{i+1}$ for every $1 \leq i<n$, and $\mathcal{C}^+_n$ with $\mathcal{C}^-_{1}$. For every $n$, there is a cocompact Fuchsian group $\Gamma_n$ which is a normal subgroup of $\Gamma$, and $M_n=\Gamma_n\backslash\mathbb {H}$. Note that $h \in \Gamma_{n}$ for every $n$. In fact, $\Gamma_n$ contains all the conjugates of $h$ in $\Gamma$. Indeed, $\Gamma_n=\hbox{ker}(\phi)$, where $\phi: \Gamma \rightarrow \mathbb{Z} \slash  n \mathbb{Z}$ is the algebraic intersection number with $\mathcal{C}$ modulo $n$. We note that since $\mathcal{C}$ is nonseparating, $\phi$ is surjective, and hence $[\Gamma:\Gamma_n]=n$. The $n$ lifts of $\mathcal{C}$ correspond to $n$ distinct $\Gamma_n$-conjugacy classes, represented by some $\Gamma$-conjugates $h_1,\ldots,h_n$ of $h$. We order these elements so that $h_i$ corresponds to the curve $\mathcal{C}_i=\mathcal{C}_i^{+}=\mathcal{C}_{i+1}^{-}$, where $\mathcal{C}_{n+1}^{-}=\mathcal{C}_{1}^{-}$.

Let $\lambda_1(M_n)$ be the smallest non-trivial eigenvalue of the Laplacian over $M_n$.  As in \cite{Cheeger+1971+195+200}, we have $\lambda_1(M_{2n}) \rightarrow 0$.  We denote the smallest non-trivial Neumann eigenvalue for $M'$ by $\mu$. This is well defined, as Neumann eigenvalues for a compact connected Riemann surface with smooth boundary are discrete and nonnegative (see \cite[Chapter I, §3, Theorem 1]{chavel}). Take $n$ such that $s_1(M_{2n})>\hbox{max}(1-\epsilon,2/3)$ and $\lambda_1(M_{2n})<\mu$. Let $u=u_1$ be the corresponding eigenfunction.

For every $i$, let $\boldsymbol{n}_i$ be the unit normal vector field along $\mathcal{C}_i$, pointing from $M_i'$ into $M_{i+1}'$, where we use the convention $M'_{2n+1}=M'_{1}$. We will show that there is some $i$ such that
the integral $\hat{u}_{\mathcal{C}_i}=\int_{\mathcal{C}_i}\partial_{\boldsymbol{n}_i}u \, ds$ is nonzero.

It is enough to show that there is some $i$ such that $\int_{M'_{i}} u \neq 0$. Indeed, by Green's theorem, we have that, for every $i$,
$$
\lambda_1\int_{M'_i}u
=-\int_{M'_i} \Delta u=
-\int_{\partial M'_i}\partial_{\boldsymbol n} u
=
\hat{u}_{\mathcal{C}_{i-1}}-\hat{u}_{\mathcal{C}_i},
$$
where we use the convention $\mathcal{C}_{0}=\mathcal{C}_{2n}$. Assume instead that $\int_{M'_{i}} u = 0$ for every $i$. Then, we have
$$ \mu\int_{M_i'}|u|^2 
\leq
\int_{M_i'}|\nabla u|^2,$$
for every $i \leq 2n$. This is because, as $M'$ is connected and therefore the eigenvalue $0$ has multiplicity $1$, we have by Rayleigh's theorem (\cite[p. 16]{chavel}) that
$$
\mu=
\inf_{\substack{f\in H^1(M'),\ f\ne0\\
                 \int_{M'}f=0}}
\frac{\int_{M'}|\nabla f|^2}
     {\int_{M'}|f|^2}.
$$

Summing over $i$, we deduce that
$$
\mu \int_{M_{2n}}|u|^2 = \sum_{i \leq 2n} \mu\int_{M_i'}|u|^2 
\leq
\sum_{i \leq 2n} \int_{M_i'}|\nabla u|^2
=
\int_{M_{2n}}|\nabla u|^2= \lambda_1\left(M_{2n}\right)\int_{M_{2n}}| u|^2.
$$
This contradicts $\lambda_1<\mu$. Hence, there is some $i$ with $\hat{u}_{\mathcal{C}_i} \neq 0$. 
We note that, while $\Gamma_{2n} \mathrel{\unlhd} \Gamma$ and $\Gamma \mathrel{\unlhd} \tilde{\Gamma}$, it is not necessarily the case that $\Gamma_{2n} \mathrel{\unlhd} \tilde{\Gamma}$. To fix this, write
$$ \tilde{\Gamma}_{2n}:=\bigcap_{\gamma\in\tilde{\Gamma}}
\gamma\Gamma_{2n}\gamma^{-1}  \mathrel{\unlhd} \tilde{\Gamma}.
$$
It is easy to see that $u$ lifts to an eigenfunction  $\tilde{u}$ for $\tilde{\Gamma}_{2n} \backslash \mathbb{H}$, with corresponding eigenvalue $\lambda_1(M_{2n})$. Furthermore, $\tilde{h}=h_i^m$ will be a hyperbolic primitive element of $\tilde{\Gamma}_{2n}$ for some $m$, and the corresponding period will be multiplied by $m$ and a normalisation constant $d$.  In particular, the period is still nonvanishing. We write $\mathcal{C}_i^{(m)}$ for the lift of $\mathcal{C}_i$.
As $\tilde{h}$ is hyperbolic, there is some $g \in \psl$ with $h^{*}=g\tilde{h}g^{-1}$ being diagonal. Then, $l^*:=g \mathcal{C}_i^{(m)}$ will lift to the imaginary axis. We write $\Gamma^{*}_{2n}=g \tilde{\Gamma}_{2n} g^{-1}.$ It is easy to see that $\Gamma_{2n}^{*}$ is still cocompact, and contains no elements with both diagonal entries equal to $0$. We also note that $\tilde{\Gamma}_{2n}$ and $\Gamma_{2n}^{*}$ have the same spectrum. In particular, $\lambda_1(M_{2n})$ is still an eigenvalue, with a corresponding eigenfunction $u^*(z)=\tilde{u}(g^{-1}z)$. 
Finally, we note that, for $\boldsymbol{n}$ as in Theorem \ref{refcount}, we have
$$
(\partial_{\boldsymbol{n}}u^*)(g \cdot z)=\partial_{\boldsymbol{n_i}}\tilde{u}(z),
$$
and, therefore, 
$$md \cdot \hat{u}_{\mathcal{C}_i}=\int_{\mathcal{C}_i^{(m)}}(\partial_{\boldsymbol{n}_i}\tilde{u})(z) \, ds=\int_{g\mathcal{C}_i^{(m)}}(\partial_{\boldsymbol{n}_i}\tilde{u})(g^{-1}z) \, ds=\int_{g\mathcal{C}_i^{(m)}}(\partial_{\boldsymbol{n}}u^{*})(z) \, ds=\lambda_1^{1/2}\hat{u}^*_{1}.$$
Hence, we have $\hat{u}^*_{1}=md\lambda_1^{-1/2}\hat{u}_{\mathcal{C}_i} \neq 0$. 
Take $\lambda=s(1-s)$ to be the smallest nontrivial eigenvalue for $\Gamma^*_{2n}$ with at least one corresponding $u$ with nonvanishing period. Clearly, $s \geq s_1>1-\epsilon$. Therefore, for $\Gamma(\epsilon)=\Gamma^*_{2n}$, we have
$N_{\Delta}(X) \sim -kX^{1-\delta},$ where $\delta=1-s$ and $k=c(s) \sum_{\lambda_j=\lambda}(\hat{u}_{1,j})^2>0.$ This concludes the proof of Theorem \ref{bias}.

\section*{Acknowledgements}
The author was supported by the MTA--HUN-REN RI Lend\"{u}let Automorphic Research Group.

\section*{Statement regarding Artificial Intelligence}
The author used OpenAI’s ChatGPT (GPT-6 Astra) to discuss and critically review portions of the mathematical
arguments, check calculations, assist with locating references, and improve the exposition. The author independently verified all the calculations, proof steps and references included in the final manuscript and takes full responsibility for its contents.

\printbibliography
\end{document}